\newtheorem{theorem}{Theorem}
\newtheorem{cor}{Corollary}
\newtheorem{proposition}{Proposition}
\newtheorem{df}{Definition}
\title{Normal forms of para-CR hypersurfaces}
\author{Alessandro Ottazzi, Gerd Schmalz}
\address{(Ottazzi) CIRM, Fondazione Bruno Kessler, Via Sommarive, 14 - Povo,
I-38123 Trento, Italy}%
\address{School of Mathematics and Statistics, UNSW, Sydney}
\email{alessandro.ottazzi@gmail.com}
\address{(Schmalz) School of Science and Technology, University of New England, Armidale, NSW 2351
Australia}
\email{gerd@une.edu.au}%
\begin{document}
\subjclass[2010]{primary: 32V99, 53D10; secondary: 58D19, 22E46, 34A26}
\keywords{para-CR structures, Chern-Moser normal form, multicontact structures, Martinet distribution, ODE symmetries}

\thanks{The authors have been partially supported by the ARC Discovery grant DP130103485, by the ARC Discovery grant DP140100531, and by the CIRM Fondazione Bruno Kessler}
\maketitle

\begin{abstract}
We consider hypersurfaces of finite type in a direct product space $\mathbb R^2\times \mathbb R^2$, which are analogues to real hypersurfaces of finite type in $\mathbb C^2$. 
We shall consider separately the cases where such hypersurfaces are regular and singular, in a sense that corresponds to Levi degeneracy in hypersurfaces in  $\mathbb C^2$. For the regular case, we study formal normal forms and prove convergence by following Chern and Moser. The normal form of such an hypersurface, considered as the solution manifold of a 2nd order ODE, gives rise to a normal form of the corresponding 2nd order ODE.  For the degenerate case, we  study normal forms for weighted $\ell$-jets. Furthermore, we study  the automorphisms of finite type hypersurfaces. 
\end{abstract}

\section{Introduction}

It is well known that the $2n+1$-dimensional Heisenberg group, viewed with its CR structure, is the \v{S}ilov boundary of the Siegel domain in $\mathbb C^{n+1}$, namely the quadric $\Im w=|z|^2$, $z\in \mathbb C^n$. It is also well known that the infinitesimal automorphisms of the Heisenberg group with its CR structure form the Lie algebra $\mathfrak{su}(1,{n+1})$. 
On the one hand, the Heisenberg algebra is  the nilpotent component of the Iwasawa decomposition of $\mathfrak{su}(1,{n+1})$. On the other hand, when $n=1$, the three dimensional Heisenberg algebra is also the nilpotent component of the Iwasawa decomposition of $\mathfrak{sl}(3,\mathbb R)$. This fact has a nice geometric interpretation: the three dimensional Heisenberg group with its multicontact structure is diffeomorphic to the hypersurface $y=a+bx$ of $\mathbb R^2_{xy}\times \mathbb R^2_{ab}$ endowed with its para-CR structure \cite{Ottazzi-Schmalz1}. 
For more on
multicontact structures see \cite{CDeKR1, CDeKR2, KorMC, Ottazzi-Hess, Han-Oh-Schmalz, CCDEM, DeOtt}, see also \cite{Yatsui1, Yatsui2}.
The infinitesimal automorphisms of this structure on $y=a+bx$ are given by $\mathfrak{sl}(3,\mathbb R)$.  As one does for CR manifolds, it is then natural to study general hypersurfaces embedded in the product space $\mathbb R^2_{xy}\times \mathbb R^2_{ab}$ and consider their infinitesimal automorphisms and normal forms.

In this order of ideas, we consider hypersurfaces of the form $S: y=F(a,b,x)$, with $\frac{\partial F}{\partial a}(0,0,0)\neq 0$, and define a para-CR structure on $S$ as the structure on $TS$ induced by the embedding, namely the two direction fields $TS\cap T\mathbb R^2_{xy}$ and 
$TS\cap T\mathbb R^2_{ab}$. The para-CR hypersurface is regular if the commutator of the two direction fields generates the missing direction in $TS$ at each point. This is analogous to Levi non-degeneracy of a CR manifold. 

In this paper we study normal forms of regular para-CR hypersurfaces, in the spirit of Chern and Moser \cite{Chern-Moser}. This leads to a normal form of second order ODE. We have learnt recently that I. Kossovskiy and D. Zaitsev \cite{KZ} obtained independently a result on the classification of second order ODE's. 
Furthermore, we also study the formal normal forms for  non regular hypersurfaces of finite type, following ideas in \cite{MR2288220}. Last but not least, we compute the infinitesimal automorphisms of  para-CR hypersurfaces of finite type, completing the study that was started in \cite{Ottazzi-Schmalz1}.

The paper is organized as follows. After establishing the notation, we study in Section~\ref{regular} the normal forms for regular hypersurfaces. First, we prove  in Theorem~\ref{th1} that the normal form can be achieved by 
 weighted jets of the function defining the surface, and then we show that the normalization is convergent in Theorem~\ref{convergence}. In Section~\ref{Section:ODE}, we interpret a surface  $S: y=F(a,b,x)$ as the space of solutions of a second order differential equation in one variable $y''(x)=B(x,y,y')$, with $a,b$ being parameters indicating the initial conditions for $y$ and $y'$. In Proposition~\ref{ODEnormal}, we show how the normal form for $S$ reflects into a normal form of the function $B$.
In Section~\ref{singular}, we consider the case of hypersurfaces $S$ that are not regular. We define a normal form for the jets of the defining equation and prove in Theorem~\ref{thm:singular} that every such jet can be put into normal form.
Finally, in Section~\ref{autos}, we apply the normal form to the study of the automorphisms of a hypersurface of finite type. 
The automorphisms for the model cases, in which $F$ is a polynomial, were studied in \cite{Ottazzi-Schmalz1}. Here we study the non model case.
In this setting, we show in Theorem~\ref{thm:autos} that there are no nontrivial isotropic automorphisms unless the Taylor expansion of the defining function $F$ in normal form consists of monomials of the type $(b^mx^n)^r$, where $m,n$ are fixed integers. In the latter case, we show that there is exactly  a $1$-parameter group of automorphisms.

\section{Notation}

We consider smooth hypersurfaces in $\mathbb R^4$ of the form $S: y= F(a,b,x)$, for which we assume $\frac{\partial F}{\partial a}(0,0,0)\neq 0$. This embedding distinguishes two direction fields $TS\cap T{\mathbb R}^2_{xy}$ and $TS\cap T{\mathbb R}^2_{ab}$ on $S$, which in local coordinates $x,a,b$ take the form 
$$
X=\frac{\partial}{\partial x},\qquad Y=\frac{\partial F}{\partial a} \frac{\partial}{\partial b}-  \frac{\partial F}{\partial b}  \frac{\partial}{\partial a}.
$$
We call a hypersurfaces in $\mathbb R^4$ with two distinguished vector fields $X,Y$ a {\it para-CR hypersurface}.
Sometimes it will be convenient to denote the partial derivatives of a function with the subscripts, in which case we shall write $F_{b},F_x,F_{bx},$ and so on.

We say that $S$ is of {\it finite type $k\geq 2$} if $k$ is the smallest integer such that $\frac{\partial^k F}{\partial^m b\partial^n x}(0,0,0)\neq 0$ for some $m,n>0$ such that $m+n=k$, provided such a $k$ exists. 
We say that $S$ is {\it regular} if $k=2$, otherwise we call $S$ {\it singular}. Given $S$ of finite type $k$, let $F_\ell(a,b,x)$ be the polynomial such that 
$$F(t^k a,tb,tx)= F_\ell(t^k a,tb,tx) + o(t^\ell),$$
as $t\to 0$, and call $F_\ell$ the {\it weighted $\ell$-jet} of $F$. Denote by $R_\ell=F-F_\ell$ the remainder of weight greater than $\ell$.
We say that a polynomial $P(a,b,x)$ has {\it weight} $\ell$ if $P(t^k a,tb,tx)=t^\ell P(a,b,x)$.
After a polynomial coordinate change,  we may write
$$
y=a+P(b,x)+R_k,
$$
where $P(b,x)= \sum_{i=1}^{k-1} \gamma_i b^ix^{k-i}$ with $\gamma_i \neq 0$ for some $1\leq i\leq k-1$, and $R_k= o(|a|+(|x|+|b|)^k)$. In particular, $F_k=a+P(b,x)$. Indeed, if 
$$
y= ga+f_1b+ \dots +f_k b^k +
h_1 x+\dots+ h_k x^k+ \sum_{i=1}^{k-1} \gamma_i b^ix^{k-i} +R_k,
$$
for some constants $g, h_1,\dots, h_k, f_1,\dots,f_k$, 
an appropriate coordinate change is $x^*=x$, $y^*=y-\sum_{i=1}^k h_k x^k$, $a^*=g a +\sum_{i=1}^k f_kb^k$ and $b^*=\gamma_m b$ leads to 
\begin{equation}\label{jeq}
y=a+b^m x^n + \sum_{i=m+1}^{k-1} \gamma_i b^ix^{k-i} +R_k,
\end{equation}
where we denoted by $m$ the smallest index for which $\gamma_m\neq 0$.

%
%
%

\section{Normal forms in the regular case}\label{regular}
In this section we consider smooth regular para-CR hypersurfaces in $\mathbb R^4$.
First, we discuss normal forms for regular surfaces. Second, we relate the normal form of regular hypersurfaces to normal forms for second order ODEs.

\subsection{Jet analysis and formal normal form}
Let $F_\ell=\sum_{\nu=2}^\ell P_\nu$ be the decomposition of $F_\ell$ into weighted homogeneous polynomials of weight $2\leq \nu\leq \ell$.

\begin{df}
We say that the regular hypersurface $S$ is in {\it jet normal form of order $\ell\ge 2$} if $F_2=P_2=a+bx$ and $P_\nu$ does not contain monomials of the type 
\begin{align}\label{norm1}
a^i x^j,&\quad a^ib^j, \quad 2i+j=\nu\\\label{norm2}
a^ibx^j,& \quad a^ib^jx,\quad  2i+j+1= \nu, \\\label{norm3}
a^ib^2x^2,&\quad a^ib^2x^3,\quad a^ib^3x^2,\quad a^ib^3x^3 
\end{align}
for $2<\nu\le \ell$.
\end{df}

In preparation of Theorem \ref{th1} below we introduce the Chern-Moser operator
$$T\colon (\eta,\alpha,\beta,\xi) \mapsto \eta- \alpha - x \beta - b\xi|_{y=a+bx}$$
where $\xi, \eta$ are functions (or formal power series) of $x,y$ and $\alpha,\beta$ are functions (or formal power series) of $a,b$. 
The argument $(\eta,\alpha,\beta,\xi)$ can be interpreted as the coefficients of a vector field
$$V=\eta \partial_y + \alpha \partial_a + \beta \partial_b +\xi \partial_x$$
and 
$$T(V) = V (y-a-bx)|_{y=a+bx}.$$
We are interested in studying the kernel and the image of $T$.
We assign to $\partial_y, \partial_a, \partial_b$ and  $\partial_x$ the weights $-2,-2,-1$ and $-1$ respectively.
Then $T$ maps a homogeneous vector field of weight $\ell-2$ to a homogeneous polynomial of weight $\ell$. It follows that the homogeneous components of both kernel and image belong to the kernel and the image respectively. Therefore it is sufficient to analyse kernel and image of the restrictions $T= T|_{V_\ell}$ , with $V_\ell=  \eta_{\ell} \partial_y + \alpha_\ell \partial_a + \beta_{\ell-1} \partial_b +\xi_{\ell-1} \partial_x$.
For $\ell\ge 5$ the kernel is trivial, which follows from the fact that the dimensions of preimage and
image coincide.
By straightforward calculation we obtain the results that are summarised in Tables \ref{tab1} and \ref{tab2}, 
where the dots stand for terms of bidegree $(i,j)$ in $(b,x)$ with $\min(a,b)\ge 2$ and $\max(a,b)\ge 4$.

\begin{table}
{\small
\begin{tabular}{|c|c|c|c|}
\hline
$\ell$ & $V$ & Kernel & Image \\ \hline
$0$& $\eta_{0} \partial_y + \alpha_0 \partial_a$ & $\partial_y +\partial_a$ & $\eta_0 - \alpha_0$\\ \hline
$1$& $\eta_{0}x \partial_y +\xi_{0} \partial_x+ \alpha_0b \partial_a+\beta_0 \partial_b $ & 
$x\partial_y +\partial_b, b\partial_a -\partial_b$& $(\eta_0-\beta_0)x- (\alpha_0+\xi_0)b$ \\ \hline
$2$& $(\eta_{0}x^2+\eta_1 y)  \partial_y +\xi_0 x \partial_x+$ &$y\partial_y+a\partial_a+b\partial_b,$ &$\eta_0x^2+(\eta_1-\beta_0-\xi_0)bx+$\\
& $+ (\alpha_0b^2+\alpha_1 a) \partial_a+\beta_0 b \partial_b $ &$b\partial_b-x\partial_x$ &$+(\eta_1-\alpha_1)a -\alpha_0b^2$ \\ \hline
$3$&$(\eta_{0}x^3+\eta_1 xy)  \partial_y +(\xi_0 x^2+\xi_1y) \partial_x+$  &$ab\partial_a+b^2\partial_b-y\partial_x,$&$\eta_0 x^3+(\eta_1-\xi_0)bx^2-(\xi_1+\beta_0)b^2x+  $\\ 
& $+ (\alpha_0b^3+\alpha_1 ab) \partial_a+(\beta_0 b^2+ \beta_1 a) \partial_b $ &$xy\partial_y+x^2\partial_x+a\partial_b$ &$+(\eta_1-\beta_1)ax-\alpha_0b^3-(\alpha_1+\xi_1)ab$\\\hline
$4$& $\sum_{j=0}^2 \eta_j x^{4-2j}y^j \partial_y+(\xi_0 x^3+\xi_1xy) \partial_x+ $&$y^2\partial_y+xy\partial_x+$ &$\eta_0x^4+(\eta_1-\xi_0)bx^3+\eta_1 ax^2+$ \\
& $+ \sum_{j=0}^2 \alpha_j b^{4-2j}a^j \partial_a+(\beta_0 b^3+ \beta_1 ab) \partial_b $ & $+a^2\partial_a+ab\partial_b$ & $+\eta_2b^2x^2+ (2\alpha_2-\xi_1-\beta_1)abx+$ \\ 
& & & $-\beta_0 b^3x+(\eta_2-\alpha_2) a^2-\alpha_1 ab^2-\alpha_0b^4$\\\hline
\end{tabular}
}
\caption{Kernel and image of $T_\ell$ for $0\le \ell\le 4$. }\label{tab1}
\end{table}
\bigskip

\bigskip

\begin{table}
{\small
\begin{tabular}{|c|c|c|}
\hline
$\ell$&$V$& Image\\\hline
$\ell=2k+1$ &$\sum_{j=0}^{k} \eta_j x^{\ell-2j}y^j \partial_y+  $& $\sum_{j=0}^{k-2}\xi_ja^jbx^{\ell-2j-1}+\sum_{j=0}^k \alpha_j a^jb^{\ell-2j}+$ \\
$k\ge2$ & $+ \sum_{j=0}^{k}\xi_j x^{\ell-2j-1}y^j \partial_x+ $& $+\sum_{j=0}^{k-1} \beta_j a^jb^{\ell-2j-1}x+\sum_{j=0}^{k-1}\eta_ja^jx^{\ell-2j}+$ \\
&  $\sum_{j=0}^{k} \alpha_j a^jb^{\ell-2j} \partial_a+  $&  $-(\xi_k+\alpha_k) a^kb -(k\xi_k+\beta_{k-1})a^{k-1}b^2x-\binom{k}{2}\xi_ka^{k-2}b^3x^2$\\
& $+ \sum_{j=0}^{k}\beta_j a^jb^{\ell-2j-1}\partial_b $& $+(\eta_{k}-\beta_{k}) a^{k} x+ (k \eta_{k}-\xi_{k-1})a^{k-1}bx^2+$\\
& & $+[\binom{k}{2}\eta_k -(k-1)\xi_{k-1}]a^{k-2}b^2x^3+\dots$ \\
\hline
$\ell=2k$ &$\sum_{j=0}^{k} \eta_j x^{\ell-2j}y^j \partial_y+  $& $\sum_{j=0}^{k-2}\xi_ja^jbx^{\ell-2j-1}+\sum_{j=0}^{k-1} \alpha_j a^jb^{\ell-2j}+$\\
$k\ge3$ & $+ \sum_{j=0}^{k-1}\xi_j x^{\ell-2j-1}y^j \partial_x+ $&  $+\sum_{j=0}^{k-2} \beta_j a^jb^{\ell-2j-1}x+\sum_{j=0}^{k-1}\eta_ja^jx^{\ell-2j}+$\\
&  $\sum_{j=0}^{k} \alpha_j a^jb^{\ell-2j} \partial_a+  $& $(k\eta_k-\xi_{k-1}-\beta_{k-1}) a^{k-1}bx +\left(\binom{k}{2}\eta_{k}-k\xi_{k-1}\right)a^{k-2}b^2x^2+$\\
& $+ \sum_{j=0}^{k-1}\beta_j a^jb^{\ell-2j-1}\partial_b $&$+\left(\binom{k}{3}\eta_k-\binom{k-1}{2}\xi_{k-1}\right)a^{k-3}b^3x^3+(\eta_k-\alpha_k)a^k +\dots$ \\
\hline
\end{tabular}
}
\caption{Image of $T_\ell$ for $\ell\ge5$.}\label{tab2}
\end{table}
\bigskip

We can now prove our first result.

\begin{theorem}\label{th1}
Let $S: y= F(a,b,x)$ be a regular hypersurface in $\mathbb R^4$. For every $\ell$ there is a polynomial mapping that takes $F_\ell$ into normal form. Moreover, if $F_\ell$ is in jet normal form, then $F_{\ell+1}$ may be taken into jet normal form by a mapping $\Phi_{\ell+1}$ defined component-wise by
\begin{align}\label{mapn}
X&=x+ \xi_\ell(x,y) & \qquad
Y&=y+ \eta_{\ell+1}(x,y)\\\nonumber
A&=a+ \alpha_{\ell+1}(a,b)& \qquad
B&=b+ \beta_\ell(a,b)
\end{align}
where $\xi_\ell$ and $\beta_\ell$ are polynomials of weight $\ell$ and $\eta_{\ell+1}$ and $\alpha_{\ell+1}$ are polynomials of weight $\ell+1$. The mapping $\Phi_{\ell+1}$ is unique if $\ell>4$.
\end{theorem}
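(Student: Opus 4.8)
The plan is to proceed by induction on the weight $\ell$, using the Chern--Moser operator $T$ as the linearized change-of-coordinates operator. The base case $\ell=2$ is handled by the normalization $F_2=a+bx$ already achieved in the Notation section. For the inductive step, suppose $F_\ell$ is in jet normal form; I want to find a map $\Phi_{\ell+1}$ of the form \eqref{mapn} so that $F_{\ell+1}\circ \Phi_{\ell+1}$ (suitably interpreted) is in jet normal form up to order $\ell+1$. The key computation is to track how the weight-$(\ell+1)$ homogeneous component $P_{\ell+1}$ of $F$ transforms under a map whose nonlinear part is $V_{\ell-1}=\eta_{\ell+1}\partial_y+\alpha_{\ell+1}\partial_a+\beta_\ell\partial_b+\xi_\ell\partial_x$: to leading order it changes by $P_{\ell+1}\mapsto P_{\ell+1}+T(V_{\ell-1})$, because the higher-order corrections to the coordinate change and the nonlinearity of $F$ only affect strictly higher weights, while the lower-weight parts of $F$ (which equal $a+bx$ plus normalized terms) contribute the combination $\eta-\alpha-x\beta-b\xi$ restricted to $y=a+bx$ by the product/chain rule — this is exactly $T(V_{\ell-1})$. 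So the inductive step reduces to the purely linear-algebra assertion: the image of $T$ on weight-$(\ell+1)$ vector fields is a complement to the span of the normal-form monomials \eqref{norm1}--\eqref{norm3} inside the space of weight-$(\ell+1)$ polynomials in $(a,b,x)$.

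That linear-algebra assertion is read off from Tables~\ref{tab1} and~\ref{tab2}. Concretely, for each $\ell+1$ I must verify: (i) every weight-$(\ell+1)$ polynomial in $(a,b,x)$ is congruent modulo $\operatorname{im} T$ to a polynomial with no monomials of the forbidden types \eqref{norm1}--\eqref{norm3}; and (ii) the remaining (normal-form) monomials are linearly independent modulo $\operatorname{im} T$, equivalently $\dim\operatorname{im} T$ equals the number of monomials removed. A clean way to organize (i)--(ii) is dimension counting: the space of weight-$(\ell+1)$ polynomials in $(a,b,x)$, the space of weight-$(\ell-1)$ vector fields $V_{\ell-1}$, and the kernel of $T$ on that space are all finite-dimensional and easily enumerated; Tables~\ref{tab1}--\ref{tab2} give the explicit image, and one checks that the forbidden monomials \eqref{norm1}--\eqref{norm3} span a complement. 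The low-weight cases $\ell+1\le 4$ must be done by hand from Table~\ref{tab1} (this is where the sporadic terms \eqref{norm3} and the nontrivial kernels live), and the generic cases $\ell+1\ge 5$ follow uniformly from Table~\ref{tab2}, where one reads off directly that $\xi_j$, $\alpha_j$, $\beta_j$, $\eta_j$ can be chosen to kill all monomials except those of the listed forbidden shapes, and conversely those forbidden monomials (coefficients like $-(\xi_k+\alpha_k)a^kb$, $(\eta_k-\alpha_k)a^k$, etc.) are precisely the obstructions.

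For the \emph{uniqueness} claim when $\ell>4$: a different choice $\Phi'_{\ell+1}$ achieving jet normal form would differ from $\Phi_{\ell+1}$ by a map whose weight-$(\ell-1)$ nonlinear part $W$ satisfies $T(W)\in\operatorname{span}$ of forbidden monomials \emph{and} $T(W)$ must be $0$ (since both images are in normal form and $T(W)$ is the induced change of the normalized component), so $W\in\ker T$; but the remark in the text that $\ker T|_{V_\ell}$ is trivial for $\ell\ge 5$ — which itself follows from the equality of the dimensions of domain and image visible in Table~\ref{tab2} — forces $W=0$. Hence $\Phi_{\ell+1}$ is unique once $\ell+1\ge 6$, i.e. $\ell>4$; for $\ell\le 4$ the nontrivial kernels in Table~\ref{tab1} account for the failure of uniqueness.

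\textbf{Main obstacle.} The conceptual structure — induction + "change by $T(V)$" + complement computation — is routine once set up; the real work, and the main obstacle, is the bookkeeping in (i)--(ii): carefully enumerating the weight-$(\ell+1)$ monomials and verifying from Table~\ref{tab2} that the coefficient matrix of $T$ (in the monomial basis) has the forbidden monomials \eqref{norm1}--\eqref{norm3} as a complement to its column space for \emph{every} $\ell\ge 5$, including checking that the binomial-coefficient entries do not accidentally degenerate and that the low-weight exceptional cases in \eqref{norm3} patch correctly onto the generic pattern. A secondary subtlety is to justify rigorously that the nonlinear corrections to $\Phi_{\ell+1}$ and to $F$ genuinely decouple by weight, so that the weight-$(\ell+1)$ analysis is exactly linear — this requires a short argument about weighted filtrations, but no heavy computation.
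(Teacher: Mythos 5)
Your proposal is correct and follows essentially the same route as the paper's own proof: induction on the weight, the observation that under a map of the form \eqref{mapn} the weight-$(\ell+1)$ component changes exactly by $T(V)$ while $F_\ell$ is untouched, elimination of the terms \eqref{norm1}--\eqref{norm3} by reading the image of $T$ off Tables~\ref{tab1} and \ref{tab2}, and uniqueness for $\ell>4$ from the triviality of the kernel of $T$ in those weights. The only blemish is terminological: in your opening paragraph (and in the phrase ``$T(W)\in\operatorname{span}$ of forbidden monomials'') the roles of the excluded monomials \eqref{norm1}--\eqref{norm3} and of the remaining normal-form monomials are momentarily interchanged --- the image of $T$ is complementary to the span of the \emph{remaining} monomials, which is exactly what your conditions (i)--(ii) then state correctly, so the argument itself is unaffected.
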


\begin{proof}
We use induction by $\ell\ge 2$. By \eqref{jeq} we may write $F_2=a+bx$.
Suppose next that $F_\ell$ is in normal form. Assume that after applying the mapping \eqref{mapn} on
$$y=a+bx + \sum_{\nu=3}^{\ell+1} P_\nu + R_{\ell+1}$$
we obtain
$$Y=A+BX + \sum_{\nu=3}^{\ell+1} P^*_\nu + R^*_{\ell+1}.$$
This yields the identity
\begin{equation*}
y+\eta_{\ell+1}- a-\alpha_{\ell+1} - (b+\beta_\ell)(x+\xi_\ell) -\sum_{\nu=3}^{\ell+1} P^*_\nu(a+\alpha_{\ell+1},b+\beta_\ell,x+\xi_\ell)+R^*_{\ell+1}|_{y=a+ bx +\sum_{\nu}^{\ell+1} P_{\nu}+R_{\ell+1}}= 0.
\end{equation*}
Isolating the terms of weight $\ell+1$ gives
$$P_{\ell+1}+\eta_{\ell+1}-\alpha_{\ell+1} - x \beta_\ell- b \xi_\ell - P^*_{\ell+1}|_{y=a+ bx}= 0.$$

This implies that we can eliminate exactly those terms in $P_{\ell+1}$ that belong to the image of the Chern-Moser operator $T_{\ell+1}$. Since the space spanned by the terms \eqref{norm1} to \eqref{norm3} is complementary to the image of $T_{\ell+1}$, from Tables \ref{tab1} and \ref{tab2} these terms can be eliminated by the mapping \eqref{mapn}. Notice that $T_{\ell+1}$ does not affect $F_{\ell}$. 
 \end{proof}
The previous theorem can be restated in terms of formal power series in the following way.

\begin{df}
Given a regular para-CR hypersurface $S: y=F(a,b,x)$, we say that $S$ is in if 
it is in jet normal form of order $k$ for any $k$.
\end{df}

\begin{cor}
For any formal power series equation  $y=a+bx +\sum_{j=3}^\infty f_j{(a,b,x)}$ there is a formal power series coordinate change
\begin{align*}
X&=x+ \sum_{\nu=2}^\infty \xi_\nu(x,y) & A&=a+ \sum_{\nu=2}^\infty\alpha_{\nu+1}(a,b)\\
Y&=y+ \sum_{\nu=2}^\infty\eta_{\nu+1}(x,y) & B&=b+ \sum_{\nu=2}^\infty\beta_{\nu}(a,b)
\end{align*}
that takes the summands of the formal defining equation into formal normal form.
\end{cor}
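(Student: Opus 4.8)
The plan is to obtain the formal statement as a direct inductive consequence of Theorem~\ref{th1}, using the observation—already noted at the end of the proof of Theorem~\ref{th1}—that the normalizing map $\Phi_{\ell+1}$ at step $\ell+1$ affects only the homogeneous components of weight $\ge \ell+1$ and leaves $F_\ell$ untouched. This ``triangularity'' of the normalization with respect to weight is exactly what allows one to pass from the finite jet statement to a statement about formal power series.

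First I would set up the induction. Starting from the given series $y = a + bx + \sum_{j\ge 3} f_j(a,b,x)$, whose weighted $2$-jet is already $a+bx$ (so in jet normal form of order $2$), I apply Theorem~\ref{th1} to produce, for each $\ell \ge 2$, a polynomial map $\Phi_{\ell+1}$ of the form \eqref{mapn} that takes the weighted $(\ell+1)$-jet into jet normal form, given that the weighted $\ell$-jet is already normalized. Composing $\Phi_3, \Phi_4, \dots, \Phi_{\ell+1}$ gives a polynomial coordinate change $\Psi_{\ell+1}$ after which the defining equation has its weighted $(\ell+1)$-jet in jet normal form. Because $\Phi_{\nu}$ has components $X = x+\xi_{\nu-1}$, $Y = y+\eta_\nu$, $A = a+\alpha_\nu$, $B = b+\beta_{\nu-1}$ with the indicated weights, the composition stabilizes in each fixed weight: the coefficient of any monomial of weight $\le N$ in $\Psi_{\ell+1}$ is the same for all $\ell+1 \ge N$. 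Hence the maps $\Psi_{\ell+1}$ converge coefficient-wise, as $\ell\to\infty$, to a well-defined formal power series transformation
\begin{align*}
X &= x + \sum_{\nu\ge 2}\xi_\nu(x,y), & A &= a + \sum_{\nu\ge 2}\alpha_{\nu+1}(a,b),\\
Y &= y + \sum_{\nu\ge 2}\eta_{\nu+1}(x,y), & B &= b + \sum_{\nu\ge 2}\beta_\nu(a,b),
\end{align*}
which is invertible as a formal transformation since its linear part is the identity.

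Next I would verify that this limiting transformation does what is claimed, namely that it takes the summands of the defining equation into formal normal form, i.e. into jet normal form of order $k$ for every $k$. Fix $k$. By construction, $\Psi_{k}$ already puts the weighted $k$-jet into jet normal form, and every subsequent $\Phi_{\nu}$ with $\nu > k$ has all components differing from the identity only by terms of weight $\ge k$ (indeed $\ge \nu-1 \ge k$), so it does not alter any monomial of weight $\le k$ in the transformed defining equation; the same is therefore true of the formal limit $\Psi_\infty$ compared with $\Psi_k$. Hence the weighted $k$-jet of the defining equation after applying $\Psi_\infty$ coincides with the weighted $k$-jet after applying $\Psi_k$, which is in jet normal form of order $k$. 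Since $k$ was arbitrary, the transformed series is in formal normal form, as required.

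The only genuine point requiring care—and the main ``obstacle,'' though it is a routine bookkeeping matter rather than a conceptual one—is the stabilization claim: one must confirm that applying $\Phi_\nu$ for $\nu > k$ really does not perturb monomials of weight $\le k$, including after substitution into the already-present higher-order terms $\sum_{\nu'} P_{\nu'}$ and the remainder. This is precisely the content of the sentence ``$T_{\ell+1}$ does not affect $F_\ell$'' in the proof of Theorem~\ref{th1}, applied inductively, together with the fact that substituting a weight-preserving-up-to-higher-order map into a series of weight $> k$ produces only terms of weight $> k$. Once this is in hand, the coefficient-wise convergence of $\Psi_{\ell+1}$ and the validity of the limit are immediate, and no convergence of the series in any analytic sense is needed, since the statement is purely formal.
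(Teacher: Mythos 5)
Your argument is correct and is exactly the intended one: the paper states this corollary without proof as an immediate restatement of Theorem~\ref{th1}, relying on the same two observations you make explicit, namely that each $\Phi_{\nu}$ alters the defining equation only in weights $\ge\nu$ (``$T_{\ell+1}$ does not affect $F_\ell$'') and that the composed transformations therefore stabilize coefficient-wise in every fixed weight. Your write-up simply spells out this iteration and formal limit, so there is nothing to add.
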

   


Following Chern and Moser~\cite{Chern-Moser} we give next a geometric construction of the normalisation. 

\begin{df}
We say that a para-CR hypersurface $S\colon y=F(x,a,b)$ is in normal form if $y=a+bx+ f(x,a,b)$ where
\begin{itemize}
\item[(i)] $f|_{x=0}=f|_{b=0}=0$
\item[(ii)] $\partial_x f|_{x=0}= \partial_b f|_{b=0}=0$
\item[(iii)] $\partial_{xxbb}f|_{x=b=0}=0$
\item[(iv)] $\partial_{xxxbb}f|_{x=b=0}= \partial_{xxbbb}f|_{x=b=0}=0$
\item[(v)] $\partial_{xxxbbb}f|_{x=b=0}=0$. 
\end{itemize}
 \end{df}
 
{\bf Remark.} If the defining equation of a para-CR hypersurface has a Taylor series and is in normal form then the Taylor series has formal normal form. If a para-CR hypersurface is real-analytic then the normal form conditions are equivalent to the formal normal form conditions.  

\begin{theorem}\label{convergence}
Let $S: y= F(x,a,b)$ be a regular para-CR hypersurface in $\mathbb R^4$. Then $S$ may be taken into normal form by a smooth change of coordinates.
\end{theorem}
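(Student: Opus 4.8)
The plan is to follow the geometric construction of Chern and Moser~\cite{Chern-Moser} rather than to resum the formal series produced by Theorem~\ref{th1}: I would build the normalizing change of coordinates directly, so that it is visibly smooth because it is assembled from straightenings of the direction fields $X,Y$, from solutions of ordinary (and first order partial) differential equations, and from finitely many algebraic operations on jets — all of which depend smoothly on $F$. Throughout, the admissible maps are the product transformations $(x,y)\mapsto(X(x,y),Y(x,y))$ and $(a,b)\mapsto(A(a,b),B(a,b))$ already used in Theorem~\ref{th1}, and at the end the uniqueness statement there (for $\ell>4$) is used to match the geometric normalization with the formal one.

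\textbf{Step 1: conditions (i)--(ii).} In analogy with real hypersurfaces in $\mathbb C^2$, $S$ carries two transverse families of Segre-type curves: the curves $x\mapsto\bigl(x,F(a_0,b_0,x)\bigr)$ in $\mathbb R^2_{xy}$ — which are exactly the solution curves of the second order ODE of Section~\ref{Section:ODE} — and the level curves $\{(a,b)\colon F(a,b,x_0)=y_0\}$ in $\mathbb R^2_{ab}$; both are genuine foliations near the origin because $F_a(0,0,0)\neq0$. The conditions $f|_{x=0}=f|_{b=0}=0$ and $\partial_xf|_{x=0}=\partial_bf|_{b=0}=0$ say that the members of these two families through the origin, and their first-order neighbours, should be flat, i.e. equal to $\{y=a,\ b=0\}$ and $\{y=a,\ x=0\}$. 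I would achieve this by straightening the two foliations — a first order PDE in each factor, solved by the method of characteristics — followed by a smooth reparametrization of the transverse variables, taking care of the order of operations so that the quadric $a+bx$ is preserved. Comparing with \eqref{norm1}--\eqref{norm2}, after this step $F$ is already in jet normal form except for the monomials \eqref{norm3}.

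\textbf{Step 2: conditions (iii)--(v) and conclusion.} After Step~1 the residual coordinate freedom consists of the finite-dimensional isotropy at $0$ of the model $y=a+bx$ (sitting inside the symmetry algebra $\mathfrak{sl}(3,\mathbb R)$ of~\cite{Ottazzi-Schmalz1}) together with an infinite-dimensional part parametrized by the choice of a curve through the origin transverse to the plane spanned by $X$ and $Y$. The normal form conditions (iii)--(v) single out one such curve — the para-CR analogue of the Chern--Moser chain — together with a preferred parametrization and an adapted frame along it. The essential point, exactly as in~\cite{Chern-Moser}, is that this distinguished curve satisfies a second order ODE, and the transport of the parametrization and the frame along it is governed by further linear ODEs; I would derive this chain equation from the structure equations underlying Tables~\ref{tab1}--\ref{tab2} and check that, for prescribed $1$-jet at $0$, its solution exists, is unique, and depends smoothly on $F$. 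Composing with Step~1 produces a smooth change of coordinates of the admissible product form after which $f$ satisfies (i)--(v); by the uniqueness in Theorem~\ref{th1} this map is the one whose formal Taylor expansion is the formal normal form.

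\textbf{Main obstacle.} The crux is Step~2: one has to identify the distinguished transverse curve intrinsically, write down its governing second order ODE, and verify that flattening it together with normalizing the frame transport is compatible with preserving conditions (i)--(ii) and the product form of the coordinate change. Once the chain equation is available, smoothness of the whole normalization follows from the smooth dependence of solutions of ODEs on their initial data and parameters, in precisely the way Chern and Moser deduce convergence in the real-analytic case.
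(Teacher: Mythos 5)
Your outline follows the same route as the paper --- a geometric, Chern--Moser-style normalization by an explicit finite sequence of product coordinate changes, with smoothness coming from implicit functions and from smooth dependence of ODE solutions --- but the part you defer as the ``main obstacle'' is precisely the substance of the proof, so as it stands there is a genuine gap. Your Step 1 is a workable plan for conditions (i)--(ii): the paper does this in its Steps 1--5 by explicitly solved implicit equations (e.g.\ $g(x,y)=-f(a,0,x)$ with $a=a(x,y)$ determined by $y=a+f(a,0,x)$), which amounts to the straightening you describe. For (iii)--(v), however, you only assert that a distinguished chain, a parametrization and a frame exist and are governed by ODEs; nothing in the proposal produces these equations or shows that imposing them respects the product form of the maps and the already achieved conditions. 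In the paper this is the content of Steps 6--8: condition (iii) comes from the first-order equation $C'/C=-f_{22}$ for a dilation factor; condition (v) from the third-order, Schwarzian-type equation $h'''-\tfrac32\,(h'')^2/h'+\tfrac13 f_{bbbxxx}(a,0,0)=0$ (so your statement that the parametrization and frame transport are governed by \emph{linear} ODEs is not accurate, though local solvability still holds); and condition (iv) from the coupled second-order chain equations $p''-(p')^2\pi'=2f_{32}$ and $\pi''+(\pi')^2p'=2f_{23}$ for the curve $\gamma$ introduced in the first step.

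Moreover, there is an interaction between the steps that your compatibility claim glosses over: the transformation built from the chain does not leave the coefficient $f_{22}$ untouched but forces it to the value $-\tfrac32 p'\pi'$, so eliminating $f_{32}$ and $f_{23}$ only yields $f^*_{22}=0$ if $f_{22}$ has been pre-adjusted to $-\tfrac32 p'\pi'$ by a prior application of the dilation step; one also needs the auxiliary normalization $q'=1+p'\pi$ of the chain parametrization, later undone by the Schwarzian step, for the computation to close up. Deriving the chain and reparametrization equations and carrying out this bookkeeping is exactly what turns your plan into a proof; without it, the claim that (iii)--(v) can be achieved smoothly and compatibly with (i)--(ii) is unsupported. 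The concluding appeal to the uniqueness statement in Theorem~\ref{th1} is not needed for existence and cannot substitute for this verification.
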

\begin{proof}
From \eqref{jeq}, we may write
$$S\colon y= a+bx + f(a,b,x),$$
with $f(0,0,0)=df(0,0,0)=d^2f(0,0,0)=0$, where by $df$ and $d^2f$ we denote the collection of all first order and second order partial derivatives of $f$. 
We  normalise this equation by a finite sequence of mappings.
For each consecutive mapping we shall denote the old coordinates by $x,y,a,b$ and  the new coordinates by $X,Y,A,B$, and  old and new defining functions by $f$ and $f^*$ respectively.
\begin{enumerate}
\item[Step 1.]
The first mapping is given by its inverse:
\begin{align*}
x&= X+ p(Y) & a&= \psi(A)\\
y&= q(Y)& b&= B + \pi(A)
\end{align*}
with $p(0)=\pi(0)=\psi(0)=0$, $q'(0)=\psi'(0)=1$. We assume that the curve 
$\gamma\colon x=p(t), y=q(t), a=\psi(t), b=\pi(t)$ belongs to $S$, that is
$$q=\psi+ \pi p+ f(\psi,\pi,p).$$
 Then $\gamma$ is mapped to 
$\gamma^*\colon Y=A=t, X=B=0$, which belongs to the new hypersurface 
$S^*\colon Y=A+BX + f^*(A,B,X)$. It follows that $f^*(A,0,0)\equiv 0$. Later we shall impose additional conditions on the curve $\gamma$, which will make it an analogue of a Chern-Moser chain.

\item[Step 2.]
In the second step we seek a change of coordinates of the form 
\begin{align*}
X&= x & A&= a+ h(a,b)\\
Y&= y+g(x,y)& B&= b
\end{align*}
that maps $S$ to $Y=A+BX+f^*{(A,B,X)}$ in such a way that $f^*(A,B,0)= f^*(A,0,X)= 0$.
In order to achieve this, first observe that the equation $Y=A+BX+f^*{(A,B,X)}$ is equivalent to 
\begin{equation}\label{neweq}
 y+g(x,y)= a+ h(a,b)+bx+ f^*(a+h(a,b),b,x)|_{y=a+bx+f(a,0,x)+f(a,b,0)+\tilde{f}}.
\end{equation}

First, imposing $f^*(A,B,0)= f(a+h(a,b),b,0) = 0$ and  $x=0$ in \eqref{neweq}, 
$$
f(a,b,0)+g(0,y)=h(a,b),
$$
whence $g(0,y)=0$ and $h(a,b)=f(a,b,0)$. In particular, since $f(a,0,0)=0$ from Step 1, $h(a,0)=0$.

Second, imposing $f^*(A,0,X)= f(a+h(a,b),0,x) = 0$ and  $b=0$ in \eqref{neweq}, and since $h(a,0)=0$,
$$
f(a,0,x)+g(x,y)=0,
$$
whence $g(x,y)=-f(a,0,x)$, where $a=a(x,y)$ is the solution of the implicit function with parameter $x$
$$y=a+f(a,0,x)$$ with $a(0,0)=0$.

%

%

\item[Step 3.]
In the third step we  use the mapping
\begin{align*}
X&= x & A&= a\\
Y&= y& B&= C(a)b\\
\end{align*}
with $C(a)=(1+f_{bx}(a,0,0))^{-1}$, in order to achieve $f^*_{BX}(A,0,0)=0$. 
The new defining function $f^*$ has the properties $f^*(A,B,0)=f^*(A,0,X)=f^*_{BX}(A,0,0)=0$. 

It follows
that
$$f^*(A,B,X)=f^*(A,0,X)+ f^*_B(A,0,X)B+ \frac{1}{2} f^*_{BB}(A,B,X)B^2=f^*_B(A,0,X)B + \phi(A,B,X)B^2.$$
Furthermore,
\begin{align*}
f^*_B(A,0,X)&=f^*_B(A,0,0)+ f^*_{BX}(A,0,0)X + O(X^2)\\
\phi(A,B,X)&= \phi(A,B,0) + \phi_X(A,B,0)X + O(X^2)
\end{align*}
with $f^*_B(A,0,0)=0$ and $\phi(A,B,0)=0$.
Hence,
\begin{equation}\label{expans}
f^*(A,B,X)= f^*_{BX}(A,0,X)BX  + \phi_X(A,B,0)B^2 X + O(B^2 X^2).
\end{equation}

\item[Step 4.]
The fourth step consists in applying the transformation
\begin{align*}
X&= x& A&= a\\
Y&= y & B&= b+f_x(a,b,0)
\end{align*}
which leads to $f^*_X(A,B,0)=0$.
%
%
\item[Step 5.]

Next, in order to achieve $f^*_B(A,0,X)=0$, we apply
\begin{align*}
X&= x+ f_b(a,0,x)& A&= a\\
Y&= y & B&= b,
\end{align*}
where $a=a(x,y)$ is the solution of the implicit function with parameter $x$
$$y=a+f(a,0,x)$$ with $a(0,0)=0$.

%

\item[Step 6.]
The mapping of the next step is again given by its inverse:
\begin{align*}
x&= C(Y)X& a&= A\\
y&= Y& b&= B/ C(A)
\end{align*}
which acts on the term $f_{22}b^2x^2= \frac{1}{4}f_{bbxx}(a,0,0)b^2x^2$ of the defining function $f(a,b,x)= \frac{1}{4}f_{bbxx}(a,0,0)b^2x^2+ \tilde{f}_1+\tilde{f}_2$, where $\tilde{f}_1$ is divisible by $b^3x^2$ and $\tilde{f}_2$ is divisible by $b^2x^3$. By chosing $C(A)$  as the solution of the ODE
$$\frac{C'(A)}{C(A)}=- f_{22}$$
with initial condition $C(0)=1$ we can eliminate the term $f_{22}$. By solving the ODE with another RHS we can modify the term $f_{22}$  to any other prescribed function, which will be used in Step 8.

\item[Step 7.]
The last transformation is given by the inverse of
\begin{align*}
x&= \sqrt{h'(Y)}X & a&= h(A)\\
y&= h(Y) & b&= \sqrt{h'(A)}B\\
\end{align*}
that we perform to eliminate the term $\frac{1}{36}f_{bbbxxx}(a,0,0,)b^3x^3$. This can be achieved by choosing $h$ as a solution of
$$h'''- \frac{3}{2} \frac{(h'')^2}{h'}+\frac{1}{3}f_{bbbxxx}(a,0,0)=0$$
with initial conditions $h(0)=0$, $h'(0)=1$.

\item[Step 8.]
It remains to choose the chain $\gamma$ in the first step so that the terms
$\frac{1}{12}f_{bbbxx}(a,0,0,)b^3x^2$ and $\frac{1}{12}f_{bbxxx}(a,0,0,)b^2x^3$ disappear. This can be achieved by another transformation that eliminates those terms.
Assume that $S$ is given by
$$y=a+bx +f_{22} b^2x^2 +f_{23} b^2x^3 + f_{32}b^3x^2 +\dots$$
where $f_{23}= f_{bbxxx}|_{b=x=0}$ and $f_{32}= f_{bbbxx}|_{b=x=0}$,
and apply a mapping of the form
\begin{align*}
X&=p(y)+x+ T_2(y)x^2+ T_3(y)x^3+\dots\\
Y&=q(y) +g_1(y)x+  g_2(y)x^2+ g_3(y)x^3+\dots\\
A&=\psi(a)+h_1(a)b+  h_2(a)b^2+ h_3(a)b^3+\dots\\ 
B&= \pi(a)+b+ S_2(a)b^2+ S_3(a)b^3+\dots
\end{align*}
to eliminate $f_{23}$ and $f_{32}$, preserving the conditions on $f_{k0},f_{0k},f_{k1},f_{1k}$. For computational convenience we impose a parametrisation on the chain $\gamma$ such that 
$$q'=1+p'\pi.$$
This condition is not essential as it can be undone by consecutive application of Step 7.

Plugging the transformation into the desired resulting equation and restricting to the hypersurface yields
\begin{multline*}
q+q'(bx+f_{22}b^2x^2+f_{23}b^2x^3+f_{32}b^3x^2)+\frac{1}{2}q''(b^2x^2+2f_{22}b^3x^3)+\dots\\
g_1x+ g'_1(bx^2+ f_{22}b^2x^3)+ \frac{1}{2}g''_1b^2x^3+\dots + g_2x^2+ g'_2bx^3+\dots\\
=\psi+ h_1b+ h_2b^2+\dots+ [\pi+b+S_2b^2+S_3b^3+\dots]\times\\
\times [p+p'(bx+f_{22}b^2x^2+f_{23}b^2x^3+f_{32}b^3x^2) +\frac{1}{2}p''b^2x^2+x+T_2x^2+T_2'bx^3+T_3x^3+\dots]
\end{multline*}


Hence,
%
\begin{align*}
g_1&=\pi & h_1&=-p\\
g_k&=\pi T_k & h_k&=-p S_k\\
g_k'&=T_{k+1}+\pi T_k'& 0&=S_{k+1}+p'S_k
\end{align*}
and therefore
\begin{align*}
T_k&=(\pi')^{k-1} & S_k&=(-p')^{k-1}\\
g_k&=\pi (\pi')^{k-1} & h_k&= -p (-p')^{k-1}.
\end{align*}

This shows that the mapping above can be written in the closed form
\begin{align*}
X&=p(y)+ \frac{x}{1-x\pi'(y)} &\qquad Y&=q(y)+ \frac{x\pi(y)}{1-x\pi'(y)}\\
A&=\psi(a)- \frac{b\psi(a)}{1+bp'(a)} & \qquad
B&= \pi(a)+\frac{b}{1+bp'(a)}.
\end{align*}

Finally, we find
\begin{align*}
f_{22}&=S_2T_2 + \frac{1}{2}\pi p'' - \frac12 q''=-p'\pi' + \frac{1}{2}\pi p'' - \frac12 q''=-\frac32 p'\pi'\\
f_{32}&= p'f_{22}  +\frac{1}{2}p'' + S_3 T_2=\frac{1}{2} (p''- (p')^2\pi')\\
f_{23}&=-g_1'f_{22}-\frac12 g_1'' +T_2'+ S_2 T_3=\frac{1}{2} (\pi''+ (\pi')^2p').
\end{align*}

The chain equations are therefore
$$p''- (p')^2\pi'=2f_{32}, \qquad \pi''+ (\pi')^2p'=2f_{23}.$$
The new equation has no term $f^*_{22}$ if $f_{22}$ in the old equation had been modified to $f_{22}=-\frac32 p'\pi'$ by prior application of Step 6.
\end{enumerate}

\end{proof}

\subsection{Second order  ODE}\label{Section:ODE}
We notice that the solutions of a second order ODE in one variable can be interpreted as a para-CR hypersurface in ${\mathbb R}^4$, when we treat the initial conditions as parameters. In this section we study the consequences of the normal form for regular hypersurfaces in ${\mathbb R}^4$ in the second order ODEs of which they represent the solutions.

Let $$y''+r(x)y'+s(x) y=0$$ be a homogeneous linear second order ODE and let $f_1(x)$ and $f_2(x)$ be fundamental solutions with $f_1(0)=f_2'(0)=1$ and $f_1'(0)=f_2(0)=0$. Then the general solution is 
$$y= af_1+bf_2= a+bx +\dots$$
It is easy to check that the Lie invariants vanish and therefore the ODE is equivalent to $\tilde{y}''=0$.
This fact is consistent with our previous analysis.
In fact, the coordinate transformation
$$\tilde{y}=\frac{y}{f_1(x)}, \quad \tilde{x}= \frac{f_2(x)}{f_1(x)}$$
takes the solution manifold to the normal form
$$\tilde{y}=a+ b\tilde{x}.$$

The original ODE can be written as
$$\tilde{y}''=\frac{ \begin{vmatrix} y'' & {f_1}''\\y& {f_1} \end{vmatrix} \begin{vmatrix} {f_2}' & {f_1}'\\{f_2}& {f_1} \end{vmatrix} -
\begin{vmatrix} {f_2}'' & {f_1}''\\{f_2}& {f_1} \end{vmatrix} \begin{vmatrix} y' & {f_1}'\\y& {f_1} \end{vmatrix}}{\begin{vmatrix} {f_2}' & {f_1}'\\{f_2}& {f_1} \end{vmatrix}}=\left(\frac{W(y,{f_1})}{W({f_2},{f_1})}\right)'W({f_2},{f_1})=0$$

where $W({f_2},{f_1})=\begin{vmatrix} {f_2}' & {f_1}'\\{f_2}& {f_1} \end{vmatrix}$ is the Wronskian of the two solutions ${f_1}, {f_2}$ etc.
  
In particular, let $y''+ry'+sy=0$ be a homogeneous linear ODE with constant coefficients and with general solution
$$y=Ae^{\lambda x} + B e^{\mu x}.$$
Assume the $\lambda, \mu$ are distinct solutions of the characteristic equation. Then
$$y=a\frac{\mu e^{\lambda x}- \lambda e^{\mu x}}{\mu-\lambda} +b \frac{e^{\lambda x}- e^{\mu x}}{\mu-\lambda}=a+bx +a\left(\frac{\mu e^{\lambda x}- \lambda e^{\mu x}}{\mu-\lambda} -1\right)+ b
\left(\frac{e^{\lambda x}- e^{\mu x}}{\mu-\lambda}-x\right).$$

The normalising coordinate change according to the procedure from above is
$$\tilde{x}=\frac{e^{\mu x}- e^{\lambda x}}{\mu e^{\lambda x}- \lambda e^{\mu x}},\qquad 
\tilde{y}= \frac{(\mu-\lambda)y}{\mu e^{\lambda x}- \lambda e^{\mu x}}$$
together with $\tilde{a}=a$ and $\tilde{b}=a$.
This point transformation also transforms the original ODE to $\tilde{y}''=0$.
Indeed, 
\begin{align*}
\tilde{y}'&=\frac{d\tilde{y}}{d\tilde{x}}=\frac{\lambda e^{\mu x}(y'-\mu y)-\mu e^{\lambda x}(y'-\lambda y)}{(\lambda-\mu)e^{(\lambda+\mu)x}}\\
\tilde{y}''&=\frac{d\tilde{y}'}{d\tilde{x}}=\frac{(\lambda e^{\mu x}-\mu e^{\lambda x})^3}{(\lambda-\mu)^3e^{2(\lambda+\mu)x}}(y''-(\lambda+\mu)y'+\lambda\mu y).
\end{align*}

The inverse transformation is
$$x=\frac{1}{\mu-\lambda} \log \frac{1+\mu \tilde{x}}{1+\lambda \tilde{x}}, \qquad y=\left[\frac{\mu}{\mu-\lambda}\left(\frac{1+\mu\tilde{x}}{1+\lambda\tilde{x}}\right)^{\frac{\lambda}{\mu-\lambda}}-\frac{\lambda}{\mu-\lambda}\left(\frac{1+\mu\tilde{x}}{1+\lambda\tilde{x}}\right)^{\frac{\mu}{\mu-\lambda}}\right]\tilde{y}.$$



We consider now  the general second order differential equation
\begin{equation}\label{ODE}
\begin{cases}
y''(x)=B(x,y,y')\\
y(0)=a\\
y'(0)=b
\end{cases}
\end{equation}
where $B$ is a function that we suppose to be analytic in $(x,y,y')$. The solution is some analytic function $y=F(x,a,b)$.
From what we showed above, we can apply smooth transformations to $F$ so that it appears in normal form. The normal form of $F=a+bx+f(x,a,b)$ has an immediate correspondence for the solution $y$ of \eqref{ODE}, and in turn it entails a corresponding normal form on $B(x,y,y')$. The investigation of the resulting normal form for the function $B$ is the content of the following statement.

\begin{proposition}\label{ODEnormal}
Consider the differential equation \eqref{ODE}, and assume that $B$ is analytic in $(x,y,y')$.
Then, after a smooth change of coordinates, $B(x,y,y')=\sum B_{ij}(y)x^i (y')^j$ satisfies
 $B_{ij}=0$ for pairs of indices
$(i,0)$, $(i,1)$ with $i\ge 0$, $(0,2)$, $(0,3)$, $(1,2)$, and $(1,3)$. In particular,
$$B= (y')^4 \sum_{j=0}^\infty (B_{0,j+4}(y) + x B_{1,j+4}(y)) (y')^j + \sum_{i,j \ge2} B_{ij}(y)x^i (y')^j.$$  
\end{proposition}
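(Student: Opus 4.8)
The plan is to recover the right-hand side $B$ from the solution manifold $S$, invoke the normal form of Theorem~\ref{convergence}, and then track the $(x,y')$-degrees. Since the solution of \eqref{ODE} is $y=F(x,a,b)$, we have $y'=F_x$ and $y''=F_{xx}$, and $B$ is obtained by eliminating the parameters: because $F_a(0)\neq0$ and the Jacobian of $(a,b)\mapsto(F,F_x)$ is the identity at the origin, the map $(x,a,b)\mapsto(x,F,F_x)$ is a local diffeomorphism near $0$; writing its inverse as $a=a(x,y,y')$, $b=b(x,y,y')$ with $a(0,y,y')=y$ and $b(0,y,y')=y'$, we get $B(x,y,y')=F_{xx}(x,a(x,y,y'),b(x,y,y'))$. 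Every coordinate change used in the proofs of Theorems~\ref{th1} and~\ref{convergence} is of the product form $X=X(x,y)$, $Y=Y(x,y)$, $A=A(a,b)$, $B=B(a,b)$; on $S$ such a change carries each solution curve $\{(x,F(x,a_0,b_0),a_0,b_0)\}$ to $\{(X(x,F),Y(x,F),A(a_0,b_0),B(a_0,b_0))\}$, so it merely relabels the initial data and acts on the family of solution curves by the point transformation $(x,y)\mapsto(X(x,y),Y(x,y))$ of the plane, which sends \eqref{ODE} to another equation of the same type. Since $B$ is analytic, $F$ is analytic and this normalization can be carried out analytically, so it suffices to show that when $F$ is in normal form the associated $B$ has the stated shape.

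Write $f=F-a-bx=\sum c_{ij}(a)x^ib^j$. Conditions (i)--(ii) of the normal form force the sum to run over $i,j\ge2$, so $f$ is divisible by $x^2b^2$, and conditions (iii)--(v) force $c_{22}=c_{23}=c_{32}=c_{33}=0$. Hence
$$F_{xx}=\sum_{\substack{i\ge0,\ j\ge2\\ (i,j)\notin\mathcal F}}(i+2)(i+1)\,c_{i+2,j}(a)\,x^ib^j,\qquad \mathcal F=\{(0,2),(0,3),(1,2),(1,3)\},$$
which already has the desired shape in the variables $(x,b,a)$; the point is that passing to $(x,y,y')$ does not spoil it. From $y'=F_x=b+f_x$ with $f_x$ divisible by $xb^2$ one finds $b=y'\,u$ with $u=u(x,y,y')$ a unit and $u-1$ divisible by $x$; consequently $bx$ is divisible by $xy'$ and $f$ by $x^2(y')^2$, so from $a=y-bx-f$ we get that $a-y$ is divisible by $xy'$. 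Substituting into a monomial of $F_{xx}$ yields
$$(i+2)(i+1)c_{i+2,j}(a)x^ib^j=(i+2)(i+1)\,(c_{i+2,j}(y)+xy'(\cdots))(1+x(\cdots))\,x^i(y')^j,$$
which equals the leading term $(i+2)(i+1)c_{i+2,j}(y)x^i(y')^j$ plus terms divisible by $x^{i+1}(y')^j$. Expanding each $(\cdots)$ as a power series, the monomial therefore contributes to $B=\sum B_{ij}(y)x^i(y')^j$ its leading term at position $(i,j)$ together with corrections located only at positions $(i',j')$ with $i'\ge i+1$ and $j'\ge j$.

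Reading off $B$ is now immediate. Every contribution has $(y')$-degree at least $2$, so $B_{i0}=B_{i1}=0$ for all $i$. For a position $(i_0,j_0)\in\mathcal F$ the leading term is absent, since $(i_0,j_0)$ is excluded from the index set of $F_{xx}$; and by the monotonicity of the correction positions, $(0,2)$, $(0,3)$ and $(1,2)$ can be reached only from a position with $i'\le0$ (impossible), while $(1,3)$ can be reached only from $(0,2)$ or $(0,3)$ — all of which lie in $\mathcal F$ and so are excluded from $F_{xx}$. Hence $B_{ij}=0$ for every $(i,j)$ in the stated list, which is precisely the displayed expression for $B$.

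I expect the main obstacle to be this last bookkeeping step — pinning down exactly which $(x,y')$-positions a monomial of $F_{xx}$ can reach under the substitution $a\mapsto a(x,y,y')$, $b\mapsto b(x,y,y')$, and in particular ruling out any leakage into the position $(1,3)$. What makes the conclusion come out cleanly is that the four normal-form conditions (iii)--(v), equivalently $c_{22}=c_{23}=c_{32}=c_{33}=0$, match exactly the four forbidden positions $\mathcal F$ of $B$, so that an off-diagonal leakage into a forbidden position always originates from a forbidden position.
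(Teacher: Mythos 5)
Your proof is correct, and it starts from the same point as the paper: recover the right-hand side as $B(x,y,y')=F_{xx}(x,a(x,y,y'),b(x,y,y'))$, where $(a,b)$ is obtained by inverting $y=a+bx+f$, $y'=b+f_x$, and then exploit that $f$ in normal form is divisible by $x^2b^2$ with $c_{22}=c_{23}=c_{32}=c_{33}=0$. Where you diverge is in how the vanishing of the listed coefficients is extracted. The paper expands $a=\sum a_j(y,y')x^j$, $b=\sum b_j(y,y')x^j$, derives the recursion $a_n=\frac{1-n}{n}b_{n-1}$, introduces $\mathfrak b=\int_0^x b\,dt$ and $\phi=\mathfrak b-xy'$, and proves by induction that each $\phi_n$ is divisible by $(y')^2$; the four extra vanishings $B_{02},B_{03},B_{12},B_{13}$ are then left to a rather terse order count. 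You instead prove the clean divisibility statements $b=y'u$ with $u-1$ divisible by $x$ (in fact by $xy'$) and $a-y$ divisible by $xy'$, so that each monomial $c_{i+2,j}(a)x^ib^j$ of $F_{xx}$ contributes its leading coefficient at position $(i,j)$ plus corrections only at positions $(i',j')$ with $i'\ge i+1$, $j'\ge j$; the monotonicity of these correction positions then shows that the four forbidden slots can only be fed from forbidden slots, which is exactly the observation that (iii)--(v) match the excluded positions of $B$. This buys a more transparent and more complete justification of $B_{02}=B_{03}=B_{12}=B_{13}=0$ than the published argument, and you are also more explicit than the paper about why the normalizing changes of variables (being of the product form $X(x,y),Y(x,y),A(a,b),B(a,b)$) act as point transformations on the ODE, so that the normal form of $F$ really does translate into a normal form of $B$; the paper's recursion, in exchange, yields explicit formulas for the low-order coefficients ($a_1=-y'$, $a_2=\tfrac12 f_{xx}(0,y,y')$, etc.). Both arguments rely, as you note, on carrying out the normalization analytically (or, equivalently, at the level of Taylor series), which is consistent with the construction in Theorem~\ref{convergence}.
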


\begin{proof}
We write $y^{i}_{j}=\frac{\partial^{i+j}y}{{\partial_x}^i{\partial_b}^j}$, $y^i_0=y^i$, and $y^0_j=y_j$. Then the solution $F(x,a,b)$ of \eqref{ODE}  is in normal form if and only if
\begin{align}
&y^{i}(x,a,0)=0, \forall i\geq 0 \label{one}\\
&y_{j}(0,a,b)= 0, \forall j\geq 0, \label{two}\\
&y_1(x,a,0)=x \label{three}\\
&y^1_1(0,a,0)=1 \label{four}\\
&y^i_1(x,a,0)=0,\forall i\geq 2 \label{five}\\
&y^1_j(0,a,b)=0,\forall j\geq 2 \label{six}\\
&y^2_2(0,a,0)=y^3_2(0,a,0)=y^2_3(0,a,0)=y^3_3(0,a,0)=0. \label{eight}
\end{align}
We have $B(x,a,b)=F_{xx}(x,a(x,y,y'),b(x,y,y'))$ where $a(x,y,y')$ and $b(x,y,y')$ are the solutions of the implicit system of equations
\begin{align*}
y&= a+bx+ f(x,a,b),\\
y'&= b+ f_x(x,a,b).
\end{align*}

Assume that 
\begin{align*}
a(x,y,y')= \sum_{j=0}^\infty a_j(y,y') x^j,\\
b(x,y,y')= \sum_{j=0}^\infty b_j(y,y') x^j.
\end{align*}
Then
\begin{align*} 
y\equiv& \sum_{j=0}^\infty a_j(y,y') x^j + \sum_{j=0}^\infty b_j(y,y') x^{j+1} + f(x, \sum_{j=0}^\infty a_j(y,y') x^j,\sum_{j=0}^\infty b_j(y,y') x^j),\\
y'\equiv&  \sum_{j=0}^\infty b_j(y,y') x^{j} + f_x(x, \sum_{j=0}^\infty a_j(y,y') x^j,\sum_{j=0}^\infty b_j(y,y') x^j).
\end{align*}

Since $f$ is divisible by $x^2$ and $f_x$ is divisible by $x$, we readily find
$$a_0=y, \qquad b_0=y', \qquad a_1=-y', \qquad b_1= -f_{xx}(0,y,y')=o((y')^3).$$

Consider the $x^2$-term:
\begin{align*}
0=& a_2x^2 + b_1 x^2 + \frac{1}{2}f_{xx}(0,y,y'),\\
0= &b_2x^2 + (\frac{1}{2} f_{xxx}(0,y,y') + f_{xxa}(0,y,y')a_1+ f_{xxb}(0,y,y')b_1)x^2.
\end{align*}
So
$$a_2=\frac{1}{2}f_{xx}(0,y,y')=o((y')^3)$$
and
$$a_n=-b_{n-1}+ \frac{1}{n}b_{n-1}=\frac{1-n}{n}b_{n-1}.$$

This means
$$a=y+{\mathfrak b}-x(y'+{\mathfrak b}_x),$$
where $\mathfrak b(x,y,y')=\int_0^x b(t,y,y')dt$.

The function $\mathfrak b$ satisfies the implicit equation
$$\mathfrak b= xy' -f(x,y+\mathfrak b-x(y'+\mathfrak b_x),\mathfrak b')$$
with $\mathfrak b(0,y,y')\equiv 0$.

If $\phi=\mathfrak b- xy'$, then
$$\phi=-f(x, y-xy'+\phi-x\phi_x,y'+\phi_x)$$
with $\phi(0,y,y')=\phi_x(0,y,y')=0$.

Let $\phi=\sum_{n=2}^\infty \phi_n x^n$. Then
$$\phi_n= -\sum_{k=0}^n\frac{\partial^n f}{\partial x^{n-k}\partial a^{k}}(0,y,y')\frac{(y')^k}{(n-k)!k!} +\dots$$
where the $\dots$ terms contain at least one factor of $\phi_\nu$ or $\phi'_\nu$ with $\nu<n$ and no terms $\phi_\nu$ or $\phi'_\nu$ with $\nu\ge n$. It can be proved by induction that all $\phi_n$ are divisible by $(y')^2$.
\end{proof}

{\bf Remark.} We show that $B\equiv 0$ if the two Lie-Tresse invariants  (see, e.g., \cite{Tresse, MR2024797}) vanish. The first invariant is $\frac{\partial^4 B}{(\partial y')^4}$. Its vanishing yields
$$B_{i,j+4}=0 \text{ for } i,j\ge 0.$$

Now vanishing of the order 0 component in $y'$ of the second invariant implies 
$$B_{i2}=0 \text{ for } i\ge 2$$
and vanishing of the order 1 component in $y'$ of the second invariant implies 
$$B_{i3}=0 \text{ for } i\ge 2.$$
Therefore $B\equiv 0$.


%
%

%

\section{Singular case}\label{singular}


It is known from the results by M. Kol\'a\v{r} \cite{MR2983069} in the CR-case that one cannot expect a convergent (smooth) normal form for singular hypersurfaces. Below we present a formal normal form construction similar to Kol\'a\v{r}'s \cite{MR2189248} formal normal form for finite type CR-manifolds. 

We start with a para-CR hypersurface $S\colon y=F(a,b,x)$ in $\mathbb R^4$ of finite type.
From \eqref{jeq}, we may write 
$$
F(a,b,x)=a +b^mx^n +\sum_{j=m+1}^{k-1} \gamma_jb^jx^{k-j} + \sum_{\nu=k+1}^\ell P_\nu(a,b,x) + R_\ell,
$$
for every $\ell>k$, where $m$ is the minimal number such that $\gamma_m\neq0$ and  $n=k-m$. 
 In particular, 
$$
F_\ell(a,b,x)= a +b^mx^n +\sum_{j=m+1}^{k-1} \gamma_jb^jx^{k-j} + \sum_{\nu=k+1}^\ell P_\nu(a,b,x).
$$
In contrast to the regular case studied in Section~\ref{regular}, we shall consider here the case $k>2$.
Similar to the approach in \cite{MR1668718} we use a single monomial $b^mx^n$ in the normalisation procedure.
\begin{df}
We say that $S$ is in {\it normal form of order $\ell\geq k$} with respect to $b^mx^n$ if $F_k = a +b^mx^n +\sum_{j=m+1}^{k-1} \gamma_jb^jx^{k-j}$ and for every $k<\nu \le \ell$, 
$P_\nu$ does not contain monomials of the type
\begin{align*}
&a^i x^j, \quad a^ib^j, \quad  ki+j =\nu, \\
& a^i b^m x^{n-1+j}, \quad a^ib^{m-1+j}x^n, \quad  k(i+1)+j-1=\nu, \\
& a^ib^{2m}x^{2n}, \quad a^j b^{3m} x^{3n}, \quad  k(i+2)=k(j+3)=\nu,\\
& a^i bx^{2n}, \text{ if } m=1,\\
& a^i b^{2m} x, \text{ if } n=1.
\end{align*}
\end{df}

\begin{theorem}\label{thm:singular}
Let $S: y=F(a,b,x)$ be a para-CR hypersurface of finite type $k>2$. For every $\ell>k$ there is a polynomial mapping that takes $F_\ell$ into normal form.
\end{theorem}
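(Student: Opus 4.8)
The plan is to mirror the proof of Theorem~\ref{th1}, replacing the role of the model $a+bx$ by the single monomial $b^mx^n$ and adjusting the Chern--Moser operator and its weights accordingly. First I would fix the weighting: $a$ has weight $k$, and $b,x$ have weight $1$, so that the defining equation $y=a+b^mx^n+\dots$ is weighted homogeneous of weight $k$ in its leading part. Writing $F_\ell=a+b^mx^n+\sum_{j=m+1}^{k-1}\gamma_jb^jx^{k-j}+\sum_{\nu=k+1}^\ell P_\nu$, I would proceed by induction on $\ell\ge k$: assuming $F_\ell$ is in normal form of order $\ell$ with respect to $b^mx^n$, I construct a polynomial coordinate change that brings $F_{\ell+1}$ into normal form of order $\ell+1$ without disturbing $F_\ell$.

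The key computational object is the modified Chern--Moser operator obtained by linearising the coordinate change at the model. A coordinate change $X=x+\xi,\ Y=y+\eta,\ A=a+\alpha,\ B=b+\beta$ with $\xi,\beta$ functions of $(x,y)$ and $\alpha$ a function of $(a,b)$ (and $\beta$ possibly of $(a,b)$ as in the regular case) produces, at weight $\ell+1$, the identity
\begin{equation*}
P_{\ell+1}+\eta_{\ell+1}-\alpha_{\ell+1}-\Big(\tfrac{\partial}{\partial x}(b^mx^n)\Big)\xi-\Big(\tfrac{\partial}{\partial b}(b^mx^n)\Big)\beta-P^*_{\ell+1}\big|_{y=a+b^mx^n}=0,
\end{equation*}
i.e.\ $P_{\ell+1}-P^*_{\ell+1}$ lies in the image of the operator $T_{\ell+1}^{(m,n)}\colon(\eta,\alpha,\beta,\xi)\mapsto \eta-\alpha-n b^mx^{n-1}\xi-m b^{m-1}x^n\beta$ restricted to its weight-$(\ell-k+1)$ homogeneous vector fields (using that $\partial_y,\partial_a$ have weight $-k$ and $\partial_b,\partial_x$ weight $-1$). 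So the normal-form conditions should be chosen precisely as a complement to the image of $T_{\ell+1}^{(m,n)}$. The listed monomials $a^ix^j,a^ib^j$ (with $ki+j=\nu$) are the obstructions coming from $\eta$ acting on the $x$-axis and $b$-axis where the $b^mx^n$ term vanishes; the families $a^ib^mx^{n-1+j},a^ib^{m-1+j}x^n$ are the cokernel contributions one degree up, coming from the inability of $\xi,\beta$ to cover all such terms once one differentiates $b^mx^n$; and the four special terms $a^ib^{2m}x^{2n},a^jb^{3m}x^{3n}$ together with the exceptional cases $m=1$ or $n=1$ are exactly the low-dimensional degeneracies of $T^{(m,n)}$, the analogues of the $b^2x^2,b^3x^3,b^3x^2,b^2x^3$ rows in Tables~\ref{tab1}--\ref{tab2}. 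I would verify by direct expansion that the span of the listed monomials of weight $\nu$ is complementary to $\mathrm{im}\,T_\nu^{(m,n)}$; this amounts to a dimension count plus a check that no listed monomial is hit.

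The induction then reads: given $F_\ell$ in normal form, decompose $P_{\ell+1}=P_{\ell+1}^{\mathrm{im}}+P_{\ell+1}^{\mathrm{norm}}$ with $P_{\ell+1}^{\mathrm{im}}\in\mathrm{im}\,T_{\ell+1}^{(m,n)}$ and $P_{\ell+1}^{\mathrm{norm}}$ in the complement; pick a preimage $(\eta_{\ell+1},\alpha_{\ell+1},\beta,\xi)$ of $P_{\ell+1}^{\mathrm{im}}$, form the corresponding polynomial map, and observe that it is weight-preserving hence fixes $F_k,\dots,F_\ell$ and changes $F_{\ell+1}$ only by subtracting $P_{\ell+1}^{\mathrm{im}}$, leaving $P_{\ell+1}^{*}=P_{\ell+1}^{\mathrm{norm}}$, which is in normal form. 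Composing the maps from $\ell=k$ up to $\ell=L-1$ gives a single polynomial map taking $F_L$ into normal form. The base case is immediate from \eqref{jeq}. I expect the main obstacle to be the bookkeeping for the exceptional rows: correctly identifying, in the cases $m=1$ and $n=1$, which extra monomial ($a^ibx^{2n}$ resp.\ $a^ib^{2m}x$) drops out of the image — this is where the weights of $\partial_b$ and $\partial_x$ relative to $a$ interact with the specific shape of $\partial_b(b^mx^n)$ and $\partial_x(b^mx^n)$, and the dimension count has to be done carefully, in parallel with the $\ell\le 4$ analysis of Table~\ref{tab1}. The uniqueness statement is not claimed here, so I would not pursue it; the weight-homogeneous structure of $T^{(m,n)}$ does all the work of ensuring lower-order jets are untouched.
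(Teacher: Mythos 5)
Your plan is a genuinely different route from the paper's. The paper does not introduce any operator in the singular case: it normalizes $F_\ell$ by a finite sequence of explicit elementary shears $a\mapsto a+\gamma a^ib^j$, $b\mapsto b+\gamma a^ib^j$, $y\mapsto y+\gamma x^jy^i$, $x\mapsto x+\gamma x^jy^i$, each removing one family of forbidden monomials at its lowest weight while disturbing only higher-weight terms or monomials on which no condition is imposed, and it spends the leftover shears on the extra families $a^ib^{2m}x^{2n}$, $a^ib^{3m}x^{3n}$ and, when $m=1$ or $n=1$, on $a^ibx^{2n}$, $a^ib^{2m}x$. No complement statement is claimed or needed there; indeed the closing remark of Section 4 stresses that the conditions could be chosen differently. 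Your proposal instead makes the complement property of a modified Chern--Moser operator the entire content of the proof, in analogy with Theorem~\ref{th1}.

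That is where the gaps are. First, the central claim is only announced, and the test you propose for it (``a dimension count plus a check that no listed monomial is hit'') is the wrong criterion: what the induction step needs is that every weight-$\nu$ polynomial can be brought into the span of the \emph{non}-listed monomials by adding an element of the image, i.e.\ that the image together with the normal space spans everything. In the regular case (Tables~\ref{tab1}, \ref{tab2}) most forbidden monomials individually \emph{do} lie in the image --- that is exactly why they can be removed --- so verifying that none of them is hit would prove the opposite of what you need. Carrying out the correct verification also forces you to handle the weights where the kernel is nontrivial (e.g.\ $\nu=2k$, where the model automorphism $\chi_k$ of Section~\ref{autos} kills the dimension count you rely on) and the exceptional families for $m=1$, $n=1$; you flag these as ``bookkeeping'', but they are precisely the substance of the theorem in your approach. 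Second, your displayed linearized identity is not correct when the $k$-jet contains the additional terms $\gamma_jb^jx^{k-j}$, $j>m$, which you yourself allow in $F_k$: the operator must be $\eta-\alpha-P_b\beta-P_x\xi$ restricted to $y=a+P$ with the full $P=b^mx^n+\sum_{j>m}\gamma_jb^jx^{k-j}$, and the extra contributions can land on constrained monomials (for instance a shear $x\mapsto x+\gamma x^jy^i$ acting on $\gamma_{m+1}b^{m+1}x^{n-1}$ produces terms of the form $a^ib^{m+1}x^{n}$, which belong to the forbidden family $a^ib^{m-1+j}x^n$). So either you prove the spanning property for the full operator $T_P$, uniformly in the $\gamma_j$, or you need the kind of ordered, triangular elimination argument the paper actually uses. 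As it stands, the induction step is not established.
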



\begin{proof}
Throughout the proof, the letter $\gamma$ will denote a real number that depends only on indices $i,j$. 
We start with the case $m,n>1$.

First, using mappings of the form 
$$x\mapsto x,\quad y\mapsto y, \quad a \mapsto a+ \gamma a^i b^j, \quad b\mapsto b$$
with $ki+j>k$, we may simplify the monomials in $P_\nu$ of the form $a^i b^j$, only by affecting other higher order terms in the equation.


Second, mappings of the kind
$$x\mapsto x,\quad y\mapsto y, \quad a \mapsto a, \quad b\mapsto b+ \gamma a^i b^j,$$
where $ki+j>1$
affect the monomials $a^i b^{m-1+j}x^n$ and  terms of higher order.
Hence, the monomials of the form $a^ib^{m-1+j}x^n$ with  $k(i+1)+j-1>k$  can be eliminated.


Third, transformations
$$x\mapsto x,\quad y\mapsto y+ \gamma x^j y^i, \quad a \mapsto a, \quad b\mapsto b$$
with $ki+j>k$ affect the monomials $a^i x^j$ and higher order terms. So, terms  $a^i x^j$ with $ki+j>k$ can be simplified. However, for $j=0$ this has been already achieved in the first step. We might then use this mapping to eliminate terms of the form $a^{i-1}b^m x^{n}$, but these terms have been eliminated in the second step. 
Therefore we use this mapping to eliminate monomials of the form $a^{i-2}b^{2m}x^{2n}$.

Fourth,
$$x\mapsto x+ \gamma x^j y^i,\quad y\mapsto y, \quad a \mapsto a, \quad b\mapsto b$$
with $j+ki>1$ 
affect $a^i b^mx^{n-1+j}$, which has already been eliminated for $j=1$. The next affected term $a^{i-1}b^{2m}x^{2n}$ has also been eliminated. Therefore, we use this mapping to eliminate monomials of the form $a^{i-2}x^{3m}b^{2n}$.
\\

Suppose now that  $m=1$. The difference to the case $m>1$ is that the terms $a^ix^n$ can be eliminated in two ways, either by 
$$x\mapsto x,\quad y\mapsto y, \quad a \mapsto a, \quad b\mapsto b+ \gamma a^i$$
or by
$$x\mapsto x,\quad y\mapsto y+ \gamma x^n y^i, \quad a \mapsto a, \quad b\mapsto b.$$ 
Since the first mappings affect only those terms, we can use the second mapping to eliminate the monomials $a^i bx^{2n}$  instead. 

Similarly, if $n=1$, the terms of the form $a^i b^m$  can be eliminated by
$$x\mapsto x+ \gamma y^i,\quad y\mapsto y, \quad a \mapsto a, \quad b\mapsto b$$
or by
$$x\mapsto x,\quad y\mapsto y, \quad a \mapsto a+\gamma a^i b^m, \quad b\mapsto b.$$ 
In this case we use the first mapping to further eliminate monomials of the form $a^ib^{2m} x$.
%
\end{proof}

We remark that there is clearly a certain freedom of choosing the conditions. For example, one could look at other terms in $P$, if there are any, such as the terms $a^i b^{m-1}x^n$ and $a^ib^mx^{n-1}$.


\section{Application to automorphisms of para-CR hypersurfaces of finite type }\label{autos}

Let $S\colon y=a+ P(b,x) + f(a,b,x)$ be a hypersurface of finite type. 
In \cite{Ottazzi-Schmalz1}, the authors study the automorphisms of $S$ in the model case, that is when $f=0$. 
We complete here the study of the automorphisms in the general case using the properties of mappings that preserve our Chern-Moser type normal form and the fact that isotropic automorphisms of a hypersurface in normal form preserve normal form.     
If $\chi$ is an infinitesimal automorphism with $\chi(0)=0$ then  $\exp t \chi$ is a one parametric family of isotropic automorphisms. Suppose that $S$ is given in normal form. Then $\exp t \chi$ preserves the normal form and so does $\chi$.

\begin{theorem}\label{thm:autos}
Let $S\colon y=a+ P(b,x) + f(a,b,x)$ be a para-CR hypersurface of finite type, and suppose that $f$ is not constant.
The linear vector field $\chi = nb\partial_b- mx\partial_x$ is an infinitesimal automorphism of $S$ with $\chi(0)=0$ if and only if 
$P(b,x)=b^m x^n$ and the Taylor expansion of $f(a,b,x)$ with respect to $b$ and $x$ consists of monomials of the type $\# (b^mx^n)^r$.
Moreover, $\chi$ and its multiples are the only isotropic infinitesimal automorphisms that may occur.
\end{theorem}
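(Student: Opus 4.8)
The plan is to work entirely in normal form and to analyze the flow $\exp(t\chi)$ of an isotropic infinitesimal automorphism $\chi$ by comparing weighted homogeneous components, using the uniqueness clause of Theorem~\ref{th1} (and its singular analogue Theorem~\ref{thm:singular}) as the main rigidity input. First I would establish the easy direction: if $P(b,x)=b^mx^n$ and every monomial of $f$ in $(b,x)$ is of the form $\#(b^mx^n)^r$, then the weight of each such monomial under the grading $\deg b = n$, $\deg x = m$ (so that $b^mx^n$ has weight $2mn$) is a multiple of $2mn$; one checks that the vector field $\chi = nb\partial_b - mx\partial_x$ annihilates $y - a - P(b,x) - f(a,b,x)$ modulo the defining relation, because it scales $b^mx^n$ by $nm - mn = 0$ and hence scales every admissible monomial trivially, while $y$ and $a$ are untouched. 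So $\chi$ is an infinitesimal automorphism, and it fixes the origin.

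For the converse and the uniqueness clause, suppose $\chi$ is any isotropic infinitesimal automorphism of $S$ in normal form. The flow $\Phi_t = \exp(t\chi)$ consists of automorphisms fixing $0$, hence (by the Remark preceding Theorem~\ref{convergence} together with the statement that isotropic automorphisms of a hypersurface in normal form preserve normal form) each $\Phi_t$ maps $S$ in normal form to $S$ in normal form. Decompose $\chi$ into weighted-homogeneous components $\chi = \chi_0 + \chi_1 + \cdots$ with respect to the weight assignment of the Chern--Moser operator ($\partial_y,\partial_a$ weight $-2$, $\partial_x,\partial_b$ weight $-1$ in the regular case; the analogous grading with $k$ in the singular case). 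Since $\chi$ preserves $S$, its lowest-weight part $\chi_0$ must preserve the model $F_2 = a+bx$ (resp. $F_k = a + b^mx^n + \cdots$), so $\chi_0$ lies in the grading-zero part of the symmetry algebra of the model, which for the regular case is spanned by $b\partial_b - x\partial_x$ and $a\partial_a + b\partial_b$ together with $y\partial_y + a\partial_a$ once we restrict to isotropic fields — here I would invoke the model computation from \cite{Ottazzi-Schmalz1}. Isotropy ($\chi(0)=0$) kills the parts acting nontrivially on $(y,a)$ at weight $\le 0$, pinning $\chi_0$ to be a multiple of $nb\partial_b - mx\partial_x$ (this is where the finite-type data enters: the weight-$0$ symmetry of $a + b^mx^n$ under $\deg b=n$, $\deg x=m$ is exactly $nb\partial_b - mx\partial_x$). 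Now the key rigidity step: the uniqueness in Theorem~\ref{th1} for $\ell>4$ (resp. the normalizing construction of Theorem~\ref{thm:singular}) forces the higher-weight parts $\chi_j$, $j\ge 1$, to vanish, because a normal-form-preserving vector field with trivial lowest-order part and vanishing action on the jet normal form must itself be zero — the higher components would generate a nontrivial normalizing map of the identity jet, contradicting uniqueness. Hence $\chi = c\,(nb\partial_b - mx\partial_x)$ exactly, which gives the "multiples only" clause and, by rescaling, reduces to $\chi = nb\partial_b - mx\partial_x$.

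With $\chi = nb\partial_b - mx\partial_x$ in hand, the condition $\chi(y-a-P-f)|_S = 0$ becomes the two linear PDEs $(nb\partial_b - mx\partial_x)P = 0$ and $(nb\partial_b - mx\partial_x)f = 0$ (the $a$-dependence decouples because $\chi$ has no $\partial_a$ or $\partial_y$ component and $f$ is in normal form, so no cancellation against $a$ or $y$ can occur). Solving the first: $P$ is weighted-homogeneous for the model, $P=\sum\gamma_i b^ix^{k-i}$, and $\chi P = \sum\gamma_i(ni - m(k-i))b^ix^{k-i} = 0$ forces $\gamma_i = 0$ unless $ni = m(k-i)$, i.e. $i/(k-i) = m/n$; since $\gcd(m,n)$ can be normalized and $m+n=k$, the only surviving term is $i=m$, giving $P(b,x) = b^mx^n$ (the leading coefficient already normalized to $1$ by \eqref{jeq}). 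Solving the second: write $f = \sum_{i,p,q} c_{i,p,q}\,a^i b^p x^q$; then $\chi f = \sum c_{i,p,q}(np - mq)\,a^i b^p x^q = 0$ forces $np = mq$ whenever $c_{i,p,q}\neq 0$, so $p = mr$, $q = nr$ for some $r\ge 1$, i.e. every monomial of $f$ in $(b,x)$ is $\#(b^mx^n)^r$ as claimed (the coefficient may still depend on $a$, which is the meaning of the $\#$).

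The step I expect to be the main obstacle is the rigidity argument that the higher-weight components $\chi_j$ vanish. Getting this cleanly requires either (a) differentiating the flow relation $\Phi_t(S)=S$ in normal form and reading off, order by order, that the weight-$(\ell+1)$ part of the normalizing map induced by $\Phi_t$ is the identity for all $\ell>4$, hence $\chi_{\ell-1}=0$ by the uniqueness in Theorem~\ref{th1}; or (b) a direct argument that a vector field tangent to $S$, vanishing at $0$, whose projection to the Chern--Moser cohomology (the span of the normal-form monomials \eqref{norm1}--\eqref{norm3}) is zero, must have each homogeneous part in the image of $T_\ell$, and then an induction showing the only such field compatible with the flow being automorphisms is the grading field. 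One must also handle the low-weight exceptional cases $\ell\le 4$ separately, checking by hand against Table~\ref{tab1} that no extra isotropic symmetries sneak in there, and in the singular case verifying that the looser normalization in Theorem~\ref{thm:singular} (which has choices) still rules out higher-order automorphism components — this may need the remark about the freedom in choosing normalization conditions to be pinned down, or an argument that is insensitive to that choice.
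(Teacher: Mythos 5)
Your overall skeleton (decompose $\chi$ into weighted homogeneous parts, identify the lowest part with a model symmetry, then solve $\chi P=0$, $\chi f=0$) is close to the paper's, and your final computation is fine, but two steps that you rely on are genuinely gapped. First, isotropy does \emph{not} pin the lowest-weight part to a multiple of $nb\partial_b-mx\partial_x$: the dilation field $E=x\partial_x+b\partial_b+ka\partial_a+ky\partial_y$ vanishes at the origin, has weight zero, and is a symmetry of every model $y=a+P(b,x)$ (for the regular model it is visible in the kernel column of Table~\ref{tab1}), so your claim that "$\chi(0)=0$ kills the parts acting nontrivially on $(y,a)$" is false, and a priori the linear part is $\alpha E+\beta(nb\partial_b-mx\partial_x)$. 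Excluding the $E$-direction (and disposing of the non-monomial $P$ case, where by \cite{Ottazzi-Schmalz1} $E$ is the \emph{only} isotropic model symmetry) requires using that $f$ is not constant: $E$ rescales the lowest deformation term $P_{k'}$ by its weight $k'\neq 0$, so it cannot preserve $S$; this eigenvalue argument is the mechanism the paper uses, and nothing in your proposal plays that role.

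Second, the "key rigidity step" cannot be obtained from the uniqueness clause of Theorem~\ref{th1} in the way you sketch, because the statement you want ("a normal-form-preserving vector field with trivial lowest-order part must vanish") is false: the model $y=a+b^mx^n$ admits the positive-weight isotropic symmetry $\chi_k=a^2\partial_a+\frac{1}{m}ab\partial_b+\frac{1}{n}xy\partial_x+y^2\partial_y$, which has trivial linear part and preserves the (normal-form) model; these kernel elements live precisely at the exceptional low weights you defer, and in the singular case Theorem~\ref{thm:singular} carries no uniqueness clause to invoke at all. The missing idea is to play the first deformation term against the normal form: the paper shows that components $\chi_h$ with $k<h<k'$ vanish because $\chi_h(y-a-b^mx^n)\neq 0$ when $\chi_h$ is not a model symmetry, and then the weight-$(k+k')$ part of the automorphism equation reduces to $\chi_{k'}(y-a-b^mx^n)=\chi_k P_{k'}$, where the left-hand side consists of non-normal terms (image of the Chern--Moser-type operator) and the right-hand side of normal terms, forcing $\chi_{k'}=0$ and $P_{k'}=0$, i.e.\ $S$ would be the model, contradicting $f$ non-constant; this is what makes $\chi$ equal to its linear part. (A minor further point: from $np=mq$ you only get $(p,q)\in\mathbb{N}\cdot(m/d,n/d)$ with $d=\gcd(m,n)$, so passing to "powers of $b^mx^n$" needs a word of justification or normalization.)
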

\begin{proof}
We suppose first that  $P$ is not a monomial and show that there are no nontrivial infinitesimal automorphisms in this case.
The infinitesimal isotropic automorphisms of the model case $f=0$ are linear of the form \cite{Ottazzi-Schmalz1}[Theorem 1]
$$\chi_0= x\partial_x + b\partial_b + ka\partial_a + ky\partial_y.$$
It follows that an isotropic infinitesimal automorphism $\chi$ of a deformation $S$ of a model $S_0$ is a deformation $\chi_0+\tilde{\chi}$ of $\chi_0$ by higher order terms  $\tilde{\chi}$ (h.o.t. for short). This implies that the first  deformation terms of $S$ are preserved by $\chi_0$, which is impossible, because it has a higher weight than $k$. Therefore para-CR hypersurfaces such that $P$ is not a monomial and that are not equivalent to the model itself have no nontrivial  isotropic infinitesimal automorphisms.
\\

Consider next the case when $P=b^mx^n$ $(k=m+n>2)$ is a monomial. Assume that $S$ is given in the form \eqref{jeq} and $\chi$ is an isotropic infinitesimal automorphism with trivial linear part, i.e.
$$\chi= \chi_{k}+ h.o.t.$$
where 
$$\chi_{k}=a^2\partial_a+\frac{1}{m}ab\partial_b+\frac{1}{n}xy\partial_x+y^2\partial_y$$
and h.o.t. are terms of weight $> k$. Let $P_{k'}$ be the polynomial of lowest weight $k'>k$ in the defining equation of $S$, that is,
$y=a+b^mx^n+P_{k'}+R_{k'}$, and assume that the jet $F_{k+k'}$ is in normal form.
Then 
$$\chi= \chi_{k} + \chi_{k'} + h.o.t.$$ 
Indeed, if there was an integer $h\in (k,k')$ such that $\chi_h\neq 0$, then the terms of weight $k+h$ in $\chi(S)|_S$ would be given
by $\chi_h(y-a-b^mx^n)$. However, the latter expression is not zero because $\chi_h$ is not an automorphism of the model. So $\chi_h=0$.

Therefore, the terms of weight $k+k'$ are the lowest order terms in
$$\chi (y-a-b^mx^n-f(a,b,x))|_{y=a+b^mx^n+f(a,b,x)}$$
and they equal
$$ \left(\chi_{k'} (y-a-b^mx^n) -\chi_k P_{k'} \right)|_{y=a+b^mx^n{+f(a,b,x)}}$$
Direct computation, using the definition of normal form shows that $\chi_{k'} (y-a-b^mx^n)$ consists of non-normal terms  and $\chi_k P_{k'}$ consists of normal terms. This is only possible if $\chi_{k'}=0$ and $P_{k'}=0$, hence only if $S$ is the model.

If $\chi$ is a deformation of a linear vector field 
$$\chi_0= nb\partial_b- mx\partial_x$$
it follows that $\chi_0$ itself preserves $S$. Since, by the previous argument, $\chi$ is completely determined by $\chi_0$ we have $\chi=\chi_0$. 
Consequently, all terms in the Taylor expansion of $f$ have to be of the form $\# (b^mx^n)^r$.
\end{proof}

\begin{bibdiv}
\begin{biblist}

\bib{CCDEM}{incollection}{
    AUTHOR = {{\v{C}}ap, Andreas},
    AUTHOR = {Cowling, Michael G.},   
    AUTHOR = {de Mari,Filippo},  
    AUTHOR = {Eastwood, Michael}, 
    AUTHOR = {McCallum, Rupert},                                  
     TITLE = {The {H}eisenberg group, {${\rm SL}(3,\Bbb R)$}, and rigidity},
 BOOKTITLE = {Harmonic analysis, group representations, automorphic forms
              and invariant theory},
    SERIES = {Lect. Notes Ser. Inst. Math. Sci. Natl. Univ. Singap.},
    VOLUME = {12},
     PAGES = {41--52},
 PUBLISHER = {World Sci. Publ., Hackensack, NJ},
      YEAR = {2007},
   MRCLASS = {22E25 (22E30)},
}

\bib{Chern-Moser}{article}{
   author={Chern, S. S.},
   author={Moser, J. K.},
   title={Real hypersurfaces in complex manifolds},
   journal={Acta Math.},
   volume={133},
   date={1974},
   pages={219--271},
   issn={0001-5962},
}


\bib{CDeKR1}{article}{
    AUTHOR = {Cowling, Michael G.},
    AUTHOR = {De Mari, Filippo},
    AUTHOR = {Kor{\'a}nyi, Adam},
    AUTHOR = {Reimann, Hans M.}, 
     TITLE = {Contact and conformal maps on {I}wasawa {$N$} groups},
      NOTE = {Harmonic analysis on complex homogeneous domains and Lie
              groups (Rome, 2001)},
   JOURNAL = {Atti Accad. Naz. Lincei Cl. Sci. Fis. Mat. Natur. Rend. Lincei
              (9) Mat. Appl.},
  FJOURNAL = {Atti della Accademia Nazionale dei Lincei. Classe di Scienze
              Fisiche, Matematiche e Naturali. Rendiconti Lincei. Serie IX.
              Matematica e Applicazioni},
    VOLUME = {13},
      DATE = {2002},
    NUMBER = {3-4},
     PAGES = {219--232},
      ISSN = {1120-6330},
   MRCLASS = {53C15 (22E15)},
}

\bib{CDeKR2}{article}{
    AUTHOR = {Cowling, Michael G.},
    AUTHOR = {De Mari, Filippo},
    AUTHOR = {Kor{\'a}nyi, Adam},
    AUTHOR = {Reimann, Hans M.}, 
     TITLE = {Contact and conformal maps in parabolic geometry. {I}},
   JOURNAL = {Geom. Dedicata},
  FJOURNAL = {Geometriae Dedicata},
    VOLUME = {111},
      DATE = {2005},
     PAGES = {65--86},
      ISSN = {0046-5755},
   MRCLASS = {53C30 (53A30 57S20)},
  }

\bib{DeOtt}{article}{
    AUTHOR = {De Mari, Filippo},
    AUTHOR = {Ottazzi, Alessandro},
     TITLE = {Rigidity of {C}arnot groups relative to multicontact
              structures},
   JOURNAL = {Proc. Amer. Math. Soc.},
  FJOURNAL = {Proceedings of the American Mathematical Society},
    VOLUME = {138},
      YEAR = {2010},
    NUMBER = {5},
     PAGES = {1889--1895},
      ISSN = {0002-9939},
     CODEN = {PAMYAR},
   MRCLASS = {53C24 (22E25)},
  MRNUMBER = {2587473 (2011b:53090)},
MRREVIEWER = {Davide Vittone},
       URL = {http://dx.doi.org/10.1090/S0002-9939-10-10212-3},
}

\bib{MR1668718}{article}{,
    AUTHOR = {E{\v{z}}ov, Vladimir V.},
    AUTHOR= {Schmalz, Gerd},
     TITLE = {Special normal form of a hyperbolic {CR}-manifold in {${\bf
              C}^4$}},
      NOTE = {Complex analysis and applications (Warsaw, 1997)},
   JOURNAL = {Ann. Polon. Math.},
  FJOURNAL = {Annales Polonici Mathematici},
    VOLUME = {70},
      YEAR = {1998},
     PAGES = {99--107},
      ISSN = {0066-2216},
     CODEN = {APNMA4},
   MRCLASS = {32V40 (32V05)},
  MRNUMBER = {1668718},
MRREVIEWER = {Sanghyun Cho},
}

\bib{Han-Oh-Schmalz}{article}{
    AUTHOR = {Han, Chong-Kyu},
    AUTHOR = {Oh, Jong-Won},
    AUTHOR = {Schmalz, Gerd},
     TITLE = {Symmetry algebra for multi-contact structures given by {$2n$}
              vector fields on {$\Bbb R^{2n+1}$}},
   JOURNAL = {Math. Ann.},
  FJOURNAL = {Mathematische Annalen},
    VOLUME = {341},
      YEAR = {2008},
    NUMBER = {3},
     PAGES = {529--542},
      ISSN = {0025-5831},
     CODEN = {MAANA},
   MRCLASS = {58H05 (35A30 53D10)},
  MRNUMBER = {2399157 (2009h:58043)},
MRREVIEWER = {Luca Capogna},
       URL = {http://dx.doi.org/10.1007/s00208-007-0198-8},
}

\bib{MR2189248}{article}{
   author={Kol{\'a}{\v{r}}, Martin},
   title={Normal forms for hypersurfaces of finite type in ${\mathbb C}\sp 2$},
   journal={Math. Res. Lett.},
   volume={12},
   date={2005},
   number={5-6},
   pages={897--910},
   issn={1073-2780},
}
\bib{MR2288220}{article}{
   author={Kol{\'a}{\v{r}}, Martin},
   title={Local symmetries of finite type hypersurfaces in $\mathbb C\sp 2$},
   journal={Sci. China Ser. A},
   volume={49},
   date={2006},
   number={11},
   pages={1633--1641},
   issn={1006-9283},
}
\bib{MR2983069}{article}{
   author={Kol{\'a}{\v{r}}, Martin},
   title={Finite type hypersurfaces with divergent normal form},
   journal={Math. Ann.},
   volume={354},
   date={2012},
   number={3},
   pages={813--825},
   issn={0025-5831},
}
\bib{KorMC}{incollection}{
    AUTHOR = {Kor{\'a}nyi, Adam},
     TITLE = {Multicontact maps: results and conjectures},
 BOOKTITLE = {Lecture notes of {S}eminario {I}nterdisciplinare di
              {M}atematica. {V}ol. {IV}},
    SERIES = {Lect. Notes Semin. Interdiscip. Mat., IV},
     PAGES = {57--63},
 PUBLISHER = {S.I.M. Dep. Mat. Univ. Basilicata, Potenza},
      YEAR = {2005},
   MRCLASS = {53D10 (53A40)},
  MRNUMBER = {2222536 (2006m:53123)},
MRREVIEWER = {Gerd Schmalz},
}
\bib{KZ}{article}{
   author={Kossovskiy, Ilya},
   author={Zaitsev, Dmitri},
   title={Normal form for second order differential equations
},
   journal={in preparation},
   }

   

\bib{MR2024797}{article}{
   author={Nurowski, Pawel},
   author={Sparling, George A.},
   title={Three-dimensional Cauchy-Riemann structures and second-order
   ordinary differential equations},
   journal={Classical Quantum Gravity},
   volume={20},
   date={2003},
   number={23},
   pages={4995--5016},
   issn={0264-9381},
}

\bib{Ottazzi-Hess}{article}{
    AUTHOR = {Ottazzi, Alessandro},
     TITLE = {Multicontact vector fields on {H}essenberg manifolds},
   JOURNAL = {J. Lie Theory},
  FJOURNAL = {Journal of Lie Theory},
    VOLUME = {15},
      YEAR = {2005},
    NUMBER = {2},
     PAGES = {357--377},
      ISSN = {0949-5932},
   MRCLASS = {22E46 (22F30 53D10 57S20)},
MRREVIEWER = {Joachim Hilgert},
}

\bib{Ottazzi-Schmalz1}{article}{
    AUTHOR = {Ottazzi, Alessandro},
    AUTHOR= {Schmalz, Gerd},
     TITLE = {Singular multicontact structures},
   JOURNAL = {J. Math. Anal. Appl.},
  FJOURNAL = {Journal of Mathematical Analysis and Applications},
    VOLUME = {443},
      YEAR = {2016},
    NUMBER = {2},
     PAGES = {1220--1231},
      ISSN = {0022-247X},
     CODEN = {JMANAK},
   MRCLASS = {53D10 (53C17)},
  MRNUMBER = {3514343},
       DOI = {10.1016/j.jmaa.2016.06.002},
       URL = {http://dx.doi.org/10.1016/j.jmaa.2016.06.002},
}


\bib{Tresse}{article}{
   author={Tresse, Arthur},
   title={D\'etermination des invariants ponctuels de l'\'equation diff\'erentielle ordinaire du second ordre $y''= \omega(x,y,y')$},
   journal={Preisschrift F\"urstl. Jablon. Ges., Leipzig, Hirzel },
   volume={},
   date={1896},
   pages={},
}

\bib{Yatsui1}{article}{
    AUTHOR = {Yatsui, Tomoaki},
     TITLE = {On pseudo-product graded {L}ie algebras},
   JOURNAL = {Hokkaido Math. J.},
  FJOURNAL = {Hokkaido Mathematical Journal},
    VOLUME = {17},
      YEAR = {1988},
    NUMBER = {3},
     PAGES = {333--343},
      ISSN = {0385-4035},
     CODEN = {HMAJDN},
   MRCLASS = {17B70},
  MRNUMBER = {965191 (90b:17037)},
MRREVIEWER = {A. L. Onishchik},
       URL = {http://dx.doi.org/10.14492/hokmj/1381517817},
}

\bib{Yatsui2}{article}{
    AUTHOR = {Yatsui, Tomoaki},
     TITLE = {On free pseudo-product fundamental graded {L}ie algebras},
   JOURNAL = {SIGMA Symmetry Integrability Geom. Methods Appl.},
  FJOURNAL = {SIGMA. Symmetry, Integrability and Geometry. Methods and
              Applications},
    VOLUME = {8},
      YEAR = {2012},
     PAGES = {Paper 038, 18},
      ISSN = {1815-0659},
   MRCLASS = {17B70 (17B40)},
  MRNUMBER = {2946862},
}

\end{biblist}
\end{bibdiv}

\end{document}